\documentclass[11pt]{amsart}
\scrollmode
\hfuzz25pt\vfuzz5pt

\usepackage{amsmath, amsthm, amssymb,amsfonts,mathtools}
\usepackage{verbatim,geometry,graphicx,multirow,array}
\usepackage{hyperref}
\usepackage{enumerate}
\usepackage{xcolor}
\usepackage{xy,psfrag}
\usepackage{datetime}
\usepackage{refcount}

\newcommand{\Real}{\operatorname{Re}}
\newcommand{\Img}{\operatorname{Im}}
\newcommand{\inv}{^{-1}}
\newcommand{\dfn}{\vcentcolon =}
\newcommand{\norm}[1]{\left\Vert #1 \right\Vert}
\newcommand{\lam}{\lambda}
\newcommand{\eps}{\varepsilon}
\newcommand{\ddt}{\frac{d}{dt}}
\newcommand{\pardt}{\frac{\partial}{\partial t}}
\newcommand{\bigo}{\mathcal{O}}
\newcommand{\Lam}{\Lambda}

\newcommand{\trace}{\operatorname{tr}}

\newcommand{\R}{{\mathbb R}}
\newcommand{\C}{{\mathbb C}}

\newcommand{\Cnxn}{{\C^{n\times n}}}
\newcommand{\Ctxt}{{\C^{2\times 2}}}
\newcommand{\bmat}[1]{ \begin{bmatrix}#1\end{bmatrix}}

\newcommand{\cont}{{\mathcal C}}
\newcommand{\diag}{\operatorname{diag}}

\newcommand{\Cn}{{\C^n}}
\newcommand{\abs}[1]{\left| #1 \right|}

\renewcommand{\ss}{\scriptstyle}
\def\sddots{\mathinner{\raise3pt\vbox{\hbox{$\ss .$}}
		\raise1.5pt\hbox{$\ss .$}\hbox{$\ss .$}}}

\theoremstyle{plain}
\newtheorem{thm}{Theorem}[section]
\newtheorem{lem}[thm]{Lemma}
\newtheorem{cor}[thm]{Corollary}

\theoremstyle{definition}
\newtheorem{rem}[thm]{Remark}

\newtheorem{defn}[thm]{Definition}
\newtheorem{exm}[thm]{Example}
\newtheorem{agree}[thm]{Agreement}

\textwidth=15cm \evensidemargin=0.5cm
\oddsidemargin=0.5cm

\begin{document}

\title{Generic Cuspidal Points and Their Localization}

\author[Dieci]{Luca Dieci}
\address{School of Mathematics, Georgia Institute of Technology,
Atlanta, GA 30332 U.S.A.}
\email{luca.dieci@math.gatech.edu}
\author[Pugliese]{Alessandro Pugliese}
\address{Deparment of Mathematics, University of Bari Aldo Moro, Via Orabona 4, 70125 Bari, Italy}
\email{alessandro.pugliese@uniba.it}
\thanks{
The authors gratefully acknowledge the School of Mathematics of Georgia Tech for hosting the visit of the second author during
the Academic Year 2023-24. The work has been partially supported by GNCS-INdAM and by 
the PRIN2022PNR P2022M7JZW 
``SAFER MESH - Sustainable mAnagement oF watEr Resources ModElS and numerical MetHods'' research grant.}

\subjclass{15A18, 15A20, 15A23, 15A99}


\keywords{Matrices depending on parameters, cuspidal points,
coalescence of eigenvalues, exceptional points}

\begin{abstract}
In this work we consider generic coalescing of eigenvalues of smooth complex valued matrix functions depending on 2 parameters.  We call generic cuspidal points the  parameter values where eigenvalues coalesce and we discuss the relation between cuspidal points and the closely related exceptional points studied in the literature.   By considering loops  in parameter space enclosing the cuspidal points, we rigorously prove when there is a phase accumulation for the eigenvectors and further detail how, by looking at the periodicity of the eigenvalues along the loop, and/or by looking at the aforementioned phase accumulation, one may be able to localize generic cuspidal points.
\end{abstract}

\maketitle

\pagestyle{myheadings}
\thispagestyle{plain}
\markboth{L.~Dieci, A.~Pugliese.}{Cuspidal points}

\noindent\textit{Accepted for publication in the SIAM Journal on Matrix Analysis and Applications.}

\bigskip

\noindent{\bf Notation.} 
Throughout the paper, we identify $\C$ with $\R^2$ in the usual way, i.e. $z=\Real(z)+i\Img(z)\equiv(\Real(z),\Img(z))\in\R^2$, and do similarly for a mapping $f:\C\rightarrow\C$: $f(z)\equiv f(\Real(z),\Img(z))=(\Real(f(z)),\Img(f(z)))$. For a vector $v\in\Cn$, $v^*$ indicates its conjugate transpose. The eigenvalues of a matrix $A\in\Cnxn$ are denoted by $\lam_1(A), \ldots, \lam_n(A)\in\C$, or just $\lam_1, \ldots, \lam_n$ if this doesn't cause any confusion. The set $\{\lam_1(A), \ldots, \lam_n(A)\}$ is called the \emph{spectrum} of $A$. The spectrum of $A$ is said to be \emph{simple} if it consists of $n$ distinct numbers; otherwise, it is said to be \emph{degenerate}. Similarly, a matrix $A$ is degenerate if it has degenerate spectrum, and an eigenvalue $\lam$ is degenerate if it occurs with algebraic multiplicity larger than 1. 
We write $A\in\cont^k(\Omega,\Cnxn)$ to indicate that $A$ has $k$ continuous derivatives at any point in $\Omega$. The symbol $\Omega$ will generally indicate a non-empty open convex subset of $\R^2$ or of $\C$. Points in $\Omega$ will be indicated by $\xi=(x,y)$ or $\xi=x+iy$. The vector norm will always be the Euclidean norm.

\section{Introduction, scope, and background}
In this work, we consider a matrix valued function $A\in\cont^k(\Omega,\Cnxn)$, $k\ge 1$, and are interested in giving rigorous results about detection and localization of parameter values $\xi\in \Omega$ where the eigenvalues of $A$ coalesce; these parameter values will be called {\emph{coalescing points}}.  Our interest is when $A$ has no special structure, in particular it is not a Hermitian matrix, a situation which has  received a lot of attention and where the celebrated ``non-crossing rule'' of Von Neumann and Wigner, see \cite{VonNeuWig}, clarifies that (within the class of generic smooth Hermitian matrices) one needs two (real) parameters for eigenvalues coalescence to occur.  Also in the present, non-Hermitian case, generically one needs two parameters for a coalescence to occur; see Corollary \ref{cor:keller}.   But, the nature of the singularity of the eigenvalues at a coalescence point is quite different from the double cone structure associated to coalescence in the Hermitian case, and it is akin to that of a square root singularity, hence the name of {\emph{cuspidal point}} which we are adopting in the present case; see Figure \ref{fig:cusp_surf}.

\begin{figure}
    \centering
    \includegraphics[width=0.75\linewidth]{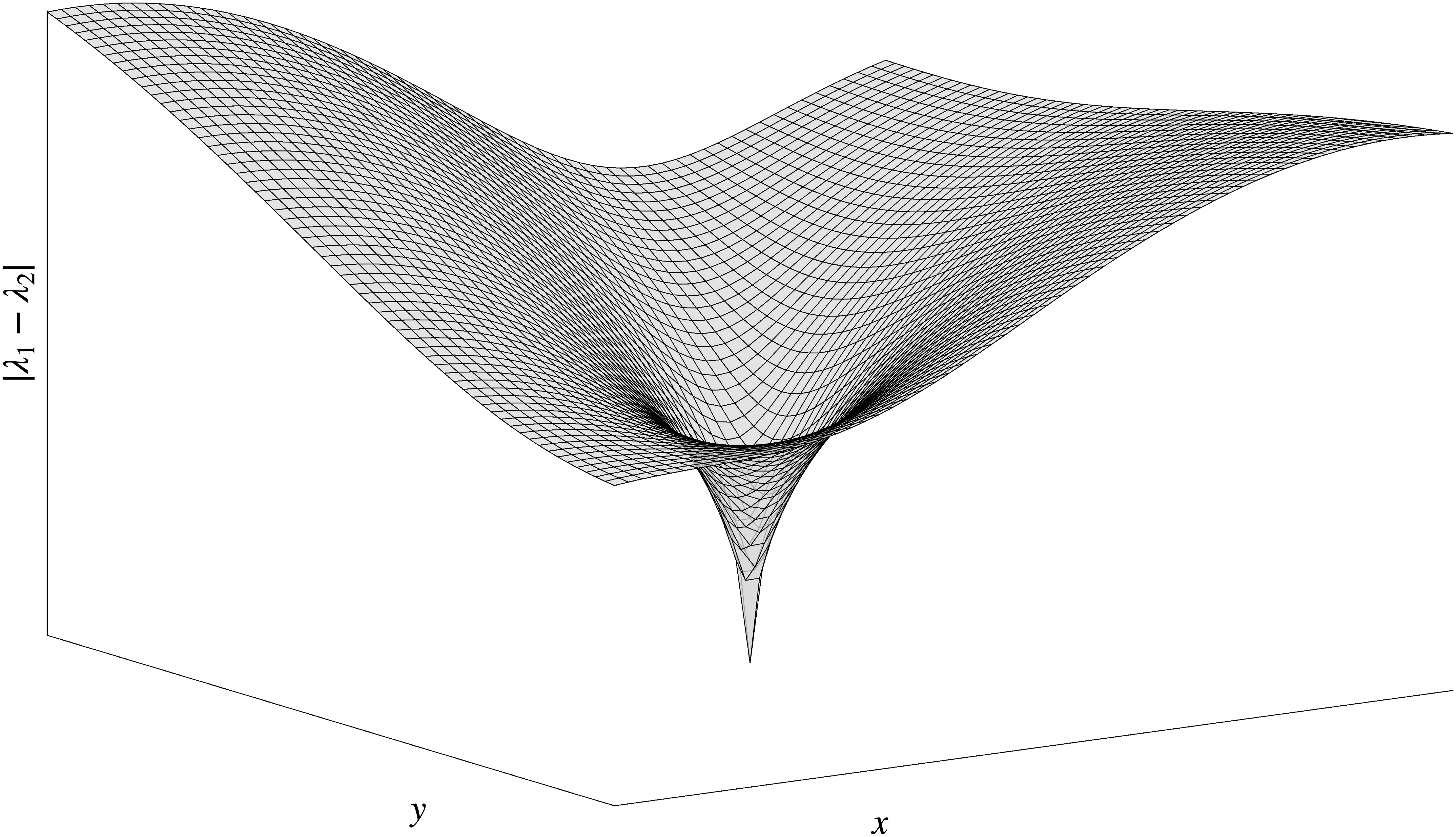}
    \caption{Generic behavior of the modulus of the difference between two coalescing eigenvalues near a cuspidal point.}
    \label{fig:cusp_surf}
\end{figure}

Our main results stem from rigorously proving that if one covers a loop containing a generic coalescing point, then there is a nontrivial phase accumulation for the eigenvectors along the loop.  We further give new results on the precise rate at which the phase accumulates when we take a loop containing the coalescing point, and let the loop shrink towards the coalescing point, and further contrast this asymptotic behavior to that one has when the loop does not contain a coalescing point.  Finally, we provide generalization of our results to the case of the loop enclosing several generic coalescing points.
Further, by monitoring the periodicity of the eigenvalues as we cover a loop, and/or by looking at the mentioned phase accumulation, we explain how one can detect and localize cuspidal points.

In the Physics literature, see references below, what we call generic cuspidal point is typically called {\em{exceptional point}}, and there are several works about exceptional points of non-Hermitian 2d-Hamiltonians (which, in our language, give $2\times 2$ complex valued matrices $A$), where the above mentioned phase accumulation has been observed.  A significant, but likely incomplete, literature review follows.  For non-Hermitian Hamiltonians, the works \cite{Bhandari, Garrison} were the first instances where it was observed that the geometrical phases associated with cyclic unitary time evolutions in quantum mechanics are replaced by complex geometrical multipliers.  The work \cite{Dembowski} reports on an experimental setting where exceptional points are encircled in the laboratory, and the authors recognized the square-root-like singularities of the eigenvalues as function of a complex interaction parameter as a signature of the phenomenon.  The later effort \cite{BerryUzdin} considers a simplified model for cycling around loops that enclose or exclude an isolated degeneracy of a non-Hermitian Hamiltonian.  The work \cite{Doppler} also observes the phase accumulation we mentioned above, and the recent work \cite{Deng} observes how the presence of an exceptional point impacts the dynamics of magnetic bilayers in the vicinity of an exceptional point.  Finally, in a series of works, \cite{Ryu1, Ryu2, Ryu3}, Ryu and coworkers tackle the case of multiple exceptional points. Yet, in spite of these works, we did not find a complete, general, and rigorous,  mathematical explanation of the phenomenon, and this is the main reason for our work.

\begin{rem}\label{HermCase}
	As already mentioned, the case of coalescing eigenvalues of parameter dependent matrices has been studied before in the self-adjoint case, and its mathematical development has reached a fairly mature stage, as have computational techniques to locate parameter values where eigenvalues coalesce.  E.g., see our own works \cite{DiPuSVD, DP12herm, DiPaPu_simax2013} as well as the more recent works \cite{BerkoPar, BerkoZel}, and references there.   But, the Hermitian case is theoretically much simpler in comparison to the non-Hermitian case considered in the present work, in the end because in the present case there is no inherent ordering of the eigenvalues (unlike when they are all real).  In the current work, we develop a general, and rigorous,  theoretical analysis of coalescing eigenvalues for non-Hermitian functions, and in forthcoming work we plan to consider computational methods to locate parameter values where such coalescings occur.
\end{rem}

A plan of our paper is as follows.  Below, in Section \ref{1dim} we will review results about smooth eigen-decompositions in the 1-parameter case, since this will play a key role later on, and in Section \ref{GenCoal} we will clarify why $A$ needs to depend on two real parameters in order to observe a coalescence of eigenvalues.  Then, in Section \ref{GCP} we present our main results, showing when and how it is possible to locate parameter values where coalescing eigenvalues occur.

\vskip 0.1in
\subsection{1-d case}\label{1dim}
Let $A\in\cont^k(\R,\Cnxn)$, $k\ge 1$, and assume that $A(t)$ has distinct eigenvalues for all  $t$. Then, one can write differential equations for the factors of its eigendecomposition 
(see \cite{DE99}).  That is,  for all $t\in \R$, one has $A(t)=V(t)\Lambda(t)V^{-1}(t)$, where 
$\Lam(t)=\diag(\lam_1(t),\ldots,\lam_n(t))$ is the matrix of eigenvalues and $V(t)$ is the matrix of eigenvectors, and further $\Lam$ and $V$ are as smooth as $A$ and satisfy the differential equations

\begin{equation}\label{eq:diffeq}
\begin{cases}
\dot\lambda_j = \big(V^{-1} \dot{A} V\big)_{jj}, & \text{for all } j = 1, \ldots, n, \\[1ex]
\dot{V} = V P, \quad \text{where } P_{jk} = \dfrac{\big(V^{-1} \dot{A} V\big)_{jk}}{\lambda_k - \lambda_j}, & \text{for all } j \ne k,\ j,k = 1, \ldots, n.
\end{cases}
\end{equation}

Note that from \eqref{eq:diffeq} the diagonal values of $P$ are undefined. This is just a manifestation of the degree of non-uniqueness possessed by the eigendecomposition of any matrix $A\in\Cnxn$ with simple spectrum: each eigenvector is unique up to multiplication by an arbitrary non-zero complex number. In fact, if $A$ is a function of $t$, and $A(t)=V(t)\Lam(t)V\inv(t)$ is an eigendecomposition having smooth factors, then the eigenvectors' matrix $V(t)$ is unique up to post-multiplication by a diagonal matrix function
\begin{equation}
\Theta(t)=\bmat{\rho_1(t)e^{i\theta_1(t)} & &\\
 & \ddots & \\
 & & \rho_n(t)e^{i\theta_n(t)}},
\end{equation}
where for all $j=1,\ldots,n$, and all $t$, $\rho_j>0$ and $\theta_j\in\R$,  are arbitrary smooth functions of $t$. The choice of the diagonal entries of $P$ completely determines how this non-uniqueness is resolved. A natural choice for $\Real\left(P_{jj}(t)\right)$ is the one for which the eigenvectors $v_j$ have constant length: $\norm{v_j(t)}_2=\norm{v_j(0)}_2$ for all $t$ and all $j$. This choice was considered in \cite{DE99}, and amounts to the following requirement:
\begin{equation}\label{eq:realP}
\Real(v_j^*v_jP_{jj}(t))=-\sum_{k\ne j}\Real(v_j^*v_kP_{kj}(t)), \text{ for all } t\in\R. 
\end{equation}
As for $\Img\left(P_{jj}(t)\right)$, we will make the following choice:
\begin{equation}\label{eq:imgP}
\Img(P_{jj}(t))=0, \text{ for all } t\in\R. 
\end{equation}
\begin{rem}
The choice given by \eqref{eq:imgP}
is called {\emph{parallel transport}}, e.g., see \cite{Simon, Bhandari},
and it is the mandatory choice to make in order to obtain Theorem \ref{thm:2GCPs_2x2} below.
\end{rem}

\subsection{Coalescence of eigenvalues}\label{GenCoal}
Having simple spectrum is a generic property for complex matrices: the set of complex $n\times n$ matrices having simple eigenvalues  is an open and dense subset of $\Cnxn$. Finer (and more helpful) information about when a matrix $A\in\Cnxn$ has degenerate spectrum is obtained by looking at its {\bf codimension}.  By codimension of a set of matrices we mean the number of (independent) conditions that a matrix must meet in order to be in that set. 
Next, we state a theorem that is a special case of one found in \cite{Keller08} (to which we refer for details), slightly rewritten to fit our purposes. We recall that $\Cnxn$ is a vector space of real dimension\footnote{Throughout, dimension and codimension of sets are always considered with respect to the real field.} $2n^2$.

\begin{thm}[{\cite[Theorem 7]{Keller08}}]\label{thm:keller} The set of matrices $A\in\Cnxn$ with $n-1$ distinct eigenvalues, with the degenerate eigenvalue occurring in one block of size $2$ in the Jordan canonical form of $A$, has dimension $2n^2-2$. The set of matrices $A\in\Cnxn$ with $n-1$ distinct eigenvalues, with the degenerate eigenvalue occurring in two blocks of size $1$ in the Jordan canonical form of $A$, has dimension $2n^2-6$. The set of matrices $A\in\Cnxn$ with fewer than $n-1$ distinct eigenvalues has dimension lower than $2n^2-4$.  
\end{thm}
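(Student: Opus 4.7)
The plan is to realize each of the three sets as a union of $GL_n(\C)$-conjugation orbits through Jordan canonical forms, compute the orbit dimensions via the orbit--stabilizer theorem, and add the degrees of freedom contributed by varying the eigenvalues. The key technical ingredient is the classical dimension formula for the centralizer of a Jordan matrix.

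First, I would fix a Jordan matrix $J$ with distinct eigenvalues $\lambda_1,\ldots,\lambda_d$ and block partition $(m^{(i)}_1\ge\cdots\ge m^{(i)}_{k_i})$ at the $i$-th eigenvalue, and recall (or briefly re-derive) that
\[
\dim_\C C(J)\;=\;\sum_{i=1}^{d}\sum_{j=1}^{k_i}(2j-1)\,m^{(i)}_j,
\]
where $C(J)\dfn\{M\in\Cnxn:MJ=JM\}$. This follows from two observations: blocks at distinct eigenvalues decouple the centralizer into a direct sum, and within a single eigenvalue the commuting matrices exhibit a shifted upper-triangular Toeplitz structure whose parameters are counted by the displayed sum. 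Since $C(J)\cap GL_n(\C)$ is a dense Zariski-open subset of $C(J)$, the conjugation stabilizer of $J$ has the same complex dimension, so by orbit--stabilizer the orbit $\mathcal{O}_J$ has complex dimension $n^2-\dim_\C C(J)$. Allowing the $d$ eigenvalues to vary over the open subset of pairwise distinct tuples in $\C^d$ adds $d$ complex degrees of freedom, giving a set of real dimension $2\bigl(n^2-\dim_\C C(J)+d\bigr)$ for all matrices sharing a given Jordan type.

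I would then apply the formula to each case. In Case 1 ($d=n-1$, partition $(2)$ at the repeated eigenvalue and $(1)$ elsewhere), $\dim_\C C(J)=2+(n-2)=n$, so the real dimension is $2(n^2-1)=2n^2-2$. In Case 2 ($d=n-1$, partition $(1,1)$ at the repeated eigenvalue and $(1)$ elsewhere), $\dim_\C C(J)=(1+3)+(n-2)=n+2$, so the real dimension is $2(n^2-3)=2n^2-6$. For Case 3, where $d<n-1$, I would enumerate the (finitely many, up to relabeling) relevant Jordan types, apply the same formula to each, and verify the dimension bound stated.

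The main obstacle is the book-keeping for the centralizer dimension formula (classical, but notation-heavy when multiple blocks at the same eigenvalue have unequal sizes), together with, in Case 3, making sure the enumeration is exhaustive and covers the borderline Jordan types (a single Jordan block of size $\ge 3$ at one eigenvalue; two distinct repeated eigenvalues each carrying a size-$2$ block; and the like), which are the ones driving the dimension bound.
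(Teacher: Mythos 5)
First, a point of reference: the paper does not prove this theorem at all --- it is quoted directly from \cite{Keller08} --- so your proposal is not competing with an argument in the text and has to be judged on its own terms. On those terms, your method (orbit--stabilizer for the conjugation action, the centralizer dimension formula $\sum_{i,j}(2j-1)m^{(i)}_j$, plus $d$ complex parameters for the varying eigenvalues) is the standard route, essentially Arnold's computation of the codimension of a Jordan stratum, and your arithmetic for the first two assertions checks out: $\dim_{\C}C(J)=n$ yields real dimension $2n^2-2$, and $\dim_{\C}C(J)=n+2$ yields $2n^2-6$.

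The gap is in Case 3: the verification you defer to will not close, because the bound as stated in Theorem \ref{thm:keller} is violated by precisely the ``borderline'' Jordan types you list. Take one eigenvalue carried by a single Jordan block of size $3$, all other eigenvalues simple: then $d=n-2$ and $\dim_{\C}C(J)=3+(n-3)=n$, so your own formula gives real dimension $2(n^2-n+(n-2))=2n^2-4$, which is \emph{larger} than $2n^2-6$. The same value $2n^2-4$ arises for two distinct degenerate eigenvalues each carrying one block of size $2$ (there $\dim_{\C}C(J)=2+2+(n-4)=n$ and $d=n-2$). What your method actually proves is that every Jordan type with $d\le n-2$ has real codimension at least $4$, i.e.\ dimension at most $2n^2-4$: each degenerate eigenvalue contributes at least $a_i-1$ to $\dim_{\C}C(J)-d$ (where $a_i$ is its algebraic multiplicity), at least $3$ if it is derogatory, and $\sum_i(a_i-1)=n-d\ge 2$. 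The threshold $2n^2-6$ in the third sentence is therefore not attainable by your argument (nor, as the examples show, is it true); Keller's original phrasing of ``still lower dimensions'' must be read relative to $2n^2-2$, not $2n^2-6$. Note that the correct bound $\le 2n^2-4$ is all that Corollary \ref{cor:keller} actually requires (every non-generic degenerate type has real codimension $\ge 4>2$). So you should either prove this corrected bound or restrict the third claim; as planned, the last step of your Case 3 would fail.
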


Theorem \ref{thm:keller} has the following corollary. 

\begin{cor}\label{cor:keller} Coalescence of eigenvalues for a matrix $A\in\Cnxn$ is a codimension 2 phenomenon. Moreover, it is a generic property, within the set of  degenerate matrices in $\Cnxn$, to have $n-1$ distinct eigenvalues and to be non-diagonalizable.
\end{cor}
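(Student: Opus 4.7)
The plan is to read off Corollary~\ref{cor:keller} directly from Theorem~\ref{thm:keller} by partitioning the set of degenerate matrices according to the three cases listed there and then comparing dimensions.

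Write $\mathcal{D}\subset\Cnxn$ for the set of degenerate matrices. A degenerate matrix $A$ falls into exactly one of the following mutually exclusive classes: (i) $A$ has $n-1$ distinct eigenvalues and the degenerate eigenvalue sits in a single Jordan block of size $2$, (ii) $A$ has $n-1$ distinct eigenvalues and the degenerate eigenvalue sits in two Jordan blocks of size $1$, (iii) $A$ has fewer than $n-1$ distinct eigenvalues. Denoting the corresponding subsets by $\mathcal{D}_1,\mathcal{D}_2,\mathcal{D}_3$, we have $\mathcal{D}=\mathcal{D}_1\cup\mathcal{D}_2\cup\mathcal{D}_3$. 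Theorem~\ref{thm:keller} gives $\dim \mathcal{D}_1=2n^2-2$, $\dim \mathcal{D}_2=2n^2-6$, and $\dim \mathcal{D}_3<2n^2-6$. Since $\Cnxn$ has real dimension $2n^2$, the corresponding codimensions are $2$, $6$, and strictly greater than $6$.

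For the first assertion, the codimension of the union is the minimum of the codimensions of its pieces, so $\operatorname{codim} \mathcal{D}=2$; equivalently, $\mathcal{D}$ contains a $(2n^2-2)$-dimensional stratum and no larger stratum, which is the rigorous reading of ``coalescence is a codimension $2$ phenomenon.'' For the second assertion, we must argue that within $\mathcal{D}$ membership in $\mathcal{D}_1$ is generic, i.e. open and dense in $\mathcal{D}$ in the relative topology. Openness is essentially automatic: $\mathcal{D}_1$ is the locus where the two coinciding eigenvalues still have a single (generalized) eigenvector, an open condition inside $\mathcal{D}$ (one way to see it is that $A\mapsto \operatorname{rank}(A-\lambda I)$ is lower semicontinuous and $\mathcal{D}_2\cup\mathcal{D}_3$ is where this rank drops further). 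Density in $\mathcal{D}$ then follows from the dimension count: $\mathcal{D}_2\cup\mathcal{D}_3$ has dimension at most $2n^2-6<2n^2-2=\dim \mathcal{D}_1$, so it cannot contain any relative neighborhood in $\mathcal{D}$ of any of its points; hence $\mathcal{D}_1$ is dense in $\mathcal{D}$. This says precisely that a generic element of $\mathcal{D}$ has $n-1$ distinct eigenvalues and is non-diagonalizable, as claimed.

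The only non-routine point is the openness argument and the appeal to ``generic,'' which should be supported by the semicontinuity of Jordan structure (or, alternatively, by noting that the smaller-dimensional pieces $\mathcal{D}_2\cup\mathcal{D}_3$ are contained in the closure of $\mathcal{D}_1$ and form its lower-dimensional boundary strata). Everything else is immediate bookkeeping on top of Theorem~\ref{thm:keller}.
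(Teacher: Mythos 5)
The paper gives no separate proof of this corollary: it is stated as an immediate consequence of Theorem~\ref{thm:keller}, read off exactly as you do by comparing the dimensions $2n^2-2$, $2n^2-6$, and lower against $\dim\Cnxn=2n^2$. Your proposal is correct and follows essentially the same route, merely adding an explicit (and reasonable) openness/density gloss on the word ``generic'' that the paper leaves implicit.
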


\begin{exm}
The conclusions in Theorem \ref{thm:keller} are easily appreciated 
by looking at a $2\times 2$ model. Consider
\begin{equation*}
A=\bmat{a & b \\ c & d}, \text{ with } a,b,c,d\in\C \ ,
\end{equation*}
so that 
\begin{equation*}
\lam_{1,2}(A)= \frac{a+d\pm \sqrt{(a-d)^2+4bc}}{2} \,\ \text{and}\,\ 
\lam_1(A)= \lam_2(A) \iff (a-d)^2+4bc=0. 
\end{equation*}
Note that the last equation amounts to two conditions, from which we deduce 
that coalescence of the eigenvalues of $A$ has codimension 2. 
If we require further $A$ to be diagonalizable, then we must have $a=d$ and $b=c=0$, which amounts to 6 conditions. Therefore, in the diagonalizable case, codimension of 
eigenvalues' coalescence is 6.
\end{exm}

Corollary \ref{cor:keller} implies that, in general, one needs to consider complex matrix valued functions of two real parameters\footnote{In this work, parameters are always real valued.} in order to observe coalescence of eigenvalues. It also says that, for 2 parameters matrix functions, coalescence is expected to occur at isolated points in parameter space and to persist under small perturbations, and that the degenerate eigenvalue is expected to occur in one Jordan block of size 2.
This is why, in this work, we consider complex matrix functions smoothly depending on two parameters: $A=A(x,y)\in\cont^k(\R^2,\Cnxn)$, with $k\ge 1$.

\begin{rem}
As already remarked, in the Physics literature (e.g., see \cite{Doppler})
parameter values where two eigenvalues coalesce forming a non-trivial Jordan
block are frequently called {\emph{exceptional points}}, but we caution that there is no consensus on this terminology.
In particular, the wording {\emph{exceptional point}} is used by Kato (see \cite[p.64]{Kato})
to indicate any parameter value where eigenvalues coalesce.  Thus, in Kato's language, the  problem $\bmat{0 & x+iy \\ x+iy & 0}$ has one exceptional point at the origin,
although at the origin the matrix is obviously diagonal.
In this work, we will actually adopt the naming of
{\emph{cuspidal points}} for the parameter values where a generic coalescing of eigenvalues occurs, since this name
reflects much better the type of singularity we have; see
Section \ref{GCP}.
\end{rem}

We conclude this section by giving a result, adapted from \cite{HS66}, which will be very useful in order to reduce consideration to the neighborhood of parameter values where eigenvalues coalesce.

\begin{thm}[Block decomposition]\label{thm:blockdec} Let $A\in\cont^k(\R,\Cnxn)$, $k\ge 0$, with $R\subset\R^2$ a rectangular set defined by $a_1\le x\le b_1$, $a_2\le y\le b_2$, with $a_i < b_i$, $i=1, 2$. Assume that the eigenvalues of $A$ can be labeled so that they belong to two disjoint sets $\Lam_1(\xi)=\{\lam_1(\xi), \ldots, \lam_p(\xi)\}$, $\Lam_2(\xi)=\{\lam_{p+1}(\xi), \ldots, \lam_n(\xi)\}$, $\Lam_1(\xi)\cap\Lam_2(\xi)=\emptyset$ for all $\xi=(x,y)\in R$. Then, there exists
an invertible $T\in\cont^k(R,\Cnxn)$ such that, for all $\xi \in R$, we have
\begin{equation}\label{eq:blockdec}
A(\xi)T(\xi)=T(\xi)\bmat{E_1(\xi) & 0\\ 0 & E_2(\xi)},
\end{equation}
where $E_1(\xi)$ is $p\times p$,  $E_2(\xi)$ is $(n-p)\times (n-p)$, and the eigenvalues of $E_i(\xi)$ are the values of $\Lam_i(\xi)$, for $i=1,2$. 
\end{thm}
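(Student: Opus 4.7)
The plan is to first construct the Riesz spectral projection of $A$ onto the $\Lam_1$-group of eigenvalues, and then to use a Kato-type trivialization that turns this varying projection into a constant block projection; the required similarity $T$ is the composition of this trivialization with a fixed basis that diagonalizes the frozen projection at a base point $\xi_0$.

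First I build the spectral projection. Since the eigenvalues of $A(\xi)$ depend continuously on $\xi\in R$ and the sets $\Lam_1(\xi),\Lam_2(\xi)$ are disjoint for every $\xi$ in the compact rectangle $R$, there is a uniform $\delta>0$ bounding their mutual distance from below. For each $\xi_*\in R$, pick a rectifiable Jordan contour $\gamma_{\xi_*}\subset\C$ (say, a finite union of small circles) enclosing $\Lam_1(\xi_*)$ and leaving $\Lam_2(\xi_*)$ outside; by continuity $\gamma_{\xi_*}$ separates $\Lam_1(\xi)$ from $\Lam_2(\xi)$ for $\xi$ in a neighborhood $U_{\xi_*}\subset R$, so one may define
\[
P_1(\xi) \dfn \frac{1}{2\pi i}\oint_{\gamma_{\xi_*}}(zI - A(\xi))\inv\,dz,\qquad \xi\in U_{\xi_*}.
\]
Since $(zI - A(\xi))\inv$ is $\cont^k$ in $\xi$ and continuous in $z$ along $\gamma_{\xi_*}$, we have $P_1\in\cont^k(U_{\xi_*},\Cnxn)$; as $P_1(\xi)$ is independent of the particular separating contour, the local definitions agree on overlaps and glue into a global $P_1\in\cont^k(R,\Cnxn)$. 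Standard spectral theory then gives $P_1^2 = P_1$, $\text{rank}(P_1(\xi)) = p$, $A(\xi)P_1(\xi) = P_1(\xi)A(\xi)$, and $\text{range}(P_1(\xi))$ equal to the $A(\xi)$-invariant subspace associated with the eigenvalues in $\Lam_1(\xi)$ (analogously for $P_2\dfn I - P_1$).

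Next I conjugate $P_1$ to a constant projection. Fix $\xi_0\in R$ together with a constant invertible $T_0\in\Cnxn$ satisfying
\[
T_0\inv P_1(\xi_0)T_0 = \bmat{I_p & 0 \\ 0 & 0},\qquad T_0\inv A(\xi_0)T_0 = \bmat{E_1(\xi_0) & 0 \\ 0 & E_2(\xi_0)};
\]
such a $T_0$ exists because $P_1(\xi_0)$ commutes with $A(\xi_0)$ and projects onto the $\Lam_1(\xi_0)$-invariant subspace. Around any $\xi_*\in R$ for which $\|P_1(\xi) - P_1(\xi_*)\|<1$, Kato's explicit formula produces a $\cont^k$ invertible matrix $U_{\xi_*}(\xi)$ with $U_{\xi_*}(\xi_*) = I$ and $P_1(\xi)U_{\xi_*}(\xi) = U_{\xi_*}(\xi)P_1(\xi_*)$. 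Compactness of $R$ lets one cover it by finitely many such Kato neighborhoods strung along a polygonal chain starting at $\xi_0$; composing the local conjugations, and correcting on overlaps by the freely available block-diagonal gauge that commutes with the frozen target projection, produces a global $\cont^k$ invertible $U:R\to\Cnxn$ with $U(\xi_0) = I$ and $P_1(\xi)U(\xi) = U(\xi)P_1(\xi_0)$. Setting $T(\xi)\dfn U(\xi)T_0$, the fact that $A(\xi)$ commutes with $P_1(\xi)$ implies that $T(\xi)\inv A(\xi)T(\xi)$ commutes with $T_0\inv P_1(\xi_0)T_0 = \diag(I_p,0)$ and hence splits into the block-diagonal form \eqref{eq:blockdec}; the spectra of the two diagonal blocks are $\Lam_1(\xi)$ and $\Lam_2(\xi)$ because Riesz projections are preserved under similarity.

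The main obstacle is the global construction of $U$: Kato's explicit formula is inherently local, and the local trivializations must be spliced across $R$ without sacrificing $\cont^k$ regularity. Convexity, and thus simple connectedness, of the rectangle $R$ is precisely what removes any monodromy obstruction to doing this — in sharp contrast with the global picture developed in Section \ref{GCP}, where loops enclosing cuspidal points exhibit nontrivial holonomy of the eigenvectors.
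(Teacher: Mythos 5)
The paper does not prove this theorem: it states it as "adapted from \cite{HS66}'' and defers to that reference for details, so you are being compared against the approach of Hsieh--Sibuya rather than against an in-text argument. Your opening moves are the right ones: the Riesz spectral projection $P_1\in\cont^k(R,\Cnxn)$ exists, commutes with $A$, and has constant rank $p$; and Kato's explicit pair-of-projections formula gives a \emph{local} $\cont^k$ conjugator $U_{\xi_*}$ (with no derivatives on $P_1$, so $k=0$ is fine locally). The gap is in the global step.

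The sentence ``Compactness of $R$ lets one cover it by finitely many such Kato neighborhoods strung along a polygonal chain starting at $\xi_0$'' does not actually produce a cover of the two-dimensional rectangle: a polygonal chain sweeps out a one-dimensional set, so the patches along it cannot cover $R$. If instead you mean ``transport along a polygonal path from $\xi_0$ to each $\xi$,'' then you must prove path-independence, which is not a consequence of simple connectedness alone for a nonabelian gauge group; it is a flatness/curvature statement about the Kato connection that you have not checked (and which, in this generality, is \emph{not} automatic pointwise — it holds only up to a block-diagonal correction). The alternative reading — a genuine \v{C}ech patching over a finite 2D cover — is the right idea, but ``correcting on overlaps by the freely available block-diagonal gauge'' is precisely the nontrivial step: you must show that the resulting cocycle with values in the commutant group $GL_p(\C)\times GL_{n-p}(\C)$ is a coboundary in the $\cont^k$ category. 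That does follow from contractibility of $R$, but it is a theorem (triviality of $\cont^k$ principal/vector bundles over a contractible paracompact base, plus a smoothing step to upgrade a continuous trivialization to $\cont^k$), not a one-line remark about monodromy.

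Hsieh--Sibuya avoid all of this by exploiting the product (plurirectangle) structure directly: construct the conjugator one variable at a time — first along the edge $y=a_2$, then, for each fixed $x$, along the vertical segment — and verify joint $\cont^k$ regularity from parameter-dependence of the underlying construction. That ``sweep-out'' argument is self-contained, produces an explicit $T$, and is precisely what the rectangular hypothesis in the statement is there to enable; it is also why the theorem is stated for rectangles rather than for arbitrary simply connected (or even contractible) domains. To repair your argument you should either reproduce that sequential construction, or else explicitly invoke triviality of $\cont^k$ bundles over contractible bases and supply the smoothing lemma, rather than gesturing at a polygonal-chain cover.
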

\begin{rem}\label{RemsGingold}
Theorem \ref{thm:blockdec} can be further refined, up to a full diagonalization of $A$ in case all its eigenvalues are distinct in $R$. 
A similar theorem holds for functions of a single real variable (with $R$ replaced by an interval $[a, b]$), as well as for matrix functions defined on a plurirectangle in any dimension; see again \cite{HS66} for details.
\end{rem}
\color{black}

\section{Generic Cuspidal Points: Phase Accumulation of Eigenvectors
and Localization}\label{GCP}

In this section we give our main results, first by considering $2\times 2$ functions $A$, and then using the $2\times 2$ case as building block for generalizing to $n\times n$ functions.  Specifically: (i) we look at the type of eigenvalues coalescing we are interested in, (ii) we study the behavior of the phase of the eigenvectors associated to an isolated coalescing eigenvalue as we cover a loop enclosing the coalescing point, (iii) we study the asymptotic behavior of the phase of eigenvectors as the loop shrinks to a point, both when there is a coalescing point in the loop, and when there is not, and finally (iv) we consider  the case of multiple coalescing points inside the loop.

First, consider this simple example.

\begin{exm}\label{SqRt}
Let $A(\xi)=\bmat{0 & 1 \\ x+iy & 0}$.  Writing $z=x+iy$, the eigenvalues are $\lam_{1,2}=\pm \sqrt{z}$.  
Obviously, a double eigenvalue occurs only at $\begin{cases} x=0\\  y=0\end{cases}$, and we note that the Jacobian of this system is the identity matrix, clearly invertible.
\end{exm}
\color{black}

\begin{defn}[Generic Cuspidal Point]\label{def:cuspPoint} Let $A\in\cont^k(\Omega,\Cnxn)$, $k\ge 1$, with $\Omega\subset\R^2$ an open set. Let $\xi_0\in\Omega$ a point where $A$ is degenerate. We say that $\xi_0$ is a \emph{Generic Cuspidal Point} (GCP) for $A$ according to (i)-(ii) below.
\begin{enumerate}[(i)]
 \item If $n=2$, let $A(\xi)=\bmat{a(\xi) & b(\xi) \\ c(\xi) & d(\xi)}$ and set 
 \begin{equation}\label{eq:GCP_def_2x2}
 \Delta(\xi)\dfn (a(\xi)-d(\xi))^2+4b(\xi)c(\xi)\quad\text{and}\quad F(\xi)\dfn \bmat{\Real(\Delta(\xi)) \\ \Img(\Delta(\xi))},
\end{equation}
so that $F\in \cont^k$. Then, $\xi_0$ is a GCP for $A$ if
\begin{equation}\label{eq:gpc_2x2}
F(\xi_0)=\bmat{0 \\ 0}\quad \text{and}\quad DF(\xi_0)\text{ is invertible}.
\end{equation}
 \item If $n>2$, suppose there exists a rectangle $R\subset\Omega$ where the eigenvalues $\lam_i(\xi)$, $i=1,\dots,n$, of $A(\xi)$ can be grouped in two disjoint sets, $\{\lam_1(\xi),\lam_2(\xi)\} \cap \{\lam_3(\xi),\ldots,\lam_n(\xi)\}=\emptyset$ for all $\xi\in R$, and let $T(\xi)$ as in Theorem \ref{thm:blockdec} be such that
\begin{equation}
 T^{-1}(\xi)A(\xi)T(\xi)=\bmat{E_1(\xi) & 0\\ 0 & E_2(\xi)},
\end{equation}
where the eigenvalues of $E_1(\xi)$ are 
$\{\lam_1(\xi),\lam_2(\xi)\}$ and those of $E_2(\xi)$ are 
$\{\lam_3(\xi),\allowbreak\ldots,\allowbreak\lam_n(\xi)\}$. Then, $\xi_0$ is a GCP for $A$ if it is a GCP for 
$E_1$ according to part (i) of this definition, and the eigenvalues of $E_2(\xi)$ are simple for all $\xi\in R$.
\end{enumerate}	
\end{defn}
\begin{rem}
We stress that, geometrically, condition \eqref{eq:gpc_2x2} amounts to require transversal intersection of two planar curves.
\end{rem}
\begin{rem} According to Definition \ref{def:cuspPoint}, in a neighborhood of a GCP $\xi_0$, we can always assume that the coalescing eigenvalues are those labeled as $\lam_1$ and $\lam_2$.
\end{rem}

A fundamental observation is that (in general) it is not possible to label the eigenvalues as continuous functions of $\xi$ in a neighborhood of a GCP.  This is evident by considering the above Example \ref{SqRt} (and see \cite[p.~108]{Kato}).  Naturally, it is always possible to label the eigenvalues to be continuous functions of $\xi$ in case $A$ is Hermitian, simply because the eigenvalues are real in that case.  A very important result of Kato, see \cite[Theorem 5.2]{Kato}, states that it is always possible to label the eigenvalues so that they are continuous functions (even if they coalesce) when $A$ depends on just one real variable $t$; in particular, when $A$ is a periodic function of $t$ of period $1$, although in this $1$-periodic case the requirements of periodicity and continuity of the eigenvalues have to be carefully weighed against each other.  Our next result, Lemma \ref{lem:per1or2}, clarifies that, in this  $1$-periodic case, one may not be able to require both $1$-periodicity and continuity of the eigenvalues. This situation is in sharp contrast with the special case of $A$ being Hermitian, where it is always possible to enforce both continuity and periodicity -- though by forcing periodicity one may lose some degree of differentiability (see \cite{DE99}).  We emphasize that the next set of results about eigenvalues' periodicity for a periodic function $A$ are needed to monitor phase variations when one takes a smooth eigendecomposition along a loop in parameter space around a GCP $\xi_0$.

\begin{agree}\label{agreement} A recurring theme in the following results is that of restricting matrix-valued functions to plane loops parametrized by periodic functions. Before proceeding, henceforth we stipulate the following conventions.

\medskip
\begin{enumerate}[(i)]
\item When we say that a function $f$ is $p$-periodic, the value of $p$ is understood to be the minimal positive period.

\medskip
\item Following \cite{Younes2010}, by \emph{(parametrized) Jordan curve} we mean a continuous map $\gamma:\R\to\R^2$ such that:
\begin{enumerate}[a)]
	\item $\gamma$ is $1$-periodic;
	\item $\gamma$ is one-to-one on $[0,1)$.
\end{enumerate}
If $\gamma$ is $\cont^1$, we call it a \emph{smooth} Jordan Curve. We generally denote the image $\gamma([0,1))$ by $\Gamma$, and, with abuse of terminology, also refer to $\Gamma$ as a (smooth) Jordan curve.
Essentially, a Jordan curve $\Gamma$ is viewed as the image of a $1$-periodic parametrization $\gamma$, such that $\gamma(t)$ traces $\Gamma$ exactly one time as $t$ ranges from 0 to 1.
\end{enumerate}
\end{agree}

\begin{lem}\label{lem:per1or2} Let $A\in\cont^k(\R,\Ctxt)$, $k\ge 0$, be a $1$-periodic function and suppose $A(t)$ has simple spectrum for all $t\in\R$. If there exists a labeling of the eigenvalues of $A(t)$ that gives $2$-periodic continuous functions $\lam_1$ and $\lam_2$, then there cannot be a labeling of the eigenvalues, $\mu_1, \mu_2$, that is continuous and $1$-periodic.
\end{lem}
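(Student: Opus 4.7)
My approach is by contradiction: assume both a continuous $2$-periodic labeling $\lambda_1,\lambda_2$ and a continuous $1$-periodic labeling $\mu_1,\mu_2$ exist, and derive a clash with the minimality convention on periods stipulated in Agreement \ref{agreement}(i). The key fact I want to exploit is that, since $A(t)$ has simple spectrum for every $t$, any two continuous labelings of the spectrum can differ only by a globally constant permutation of the indices; the $2\times 2$ simple-spectrum structure then makes the argument a connectedness argument.

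The first step is to compare the two labelings. For each $t\in\R$ one has $\{\mu_1(t),\mu_2(t)\}=\{\lambda_1(t),\lambda_2(t)\}$ and $\lambda_1(t)\ne \lambda_2(t)$. Define
\[
S\dfn\{t\in\R:\mu_1(t)=\lambda_1(t)\}.
\]
I would show $S$ is clopen in $\R$: it is closed by continuity of $\mu_1-\lambda_1$; it is open because if $t\in S$ then $\mu_1(t)\ne\lambda_2(t)$, so by continuity $\mu_1\ne\lambda_2$ in an open neighborhood of $t$, and since $\mu_1(s)\in\{\lambda_1(s),\lambda_2(s)\}$ for every $s$ this forces $\mu_1=\lambda_1$ there. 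By connectedness of $\R$, either $S=\R$ or $S=\emptyset$, so globally $\mu_1\equiv\lambda_1,\ \mu_2\equiv\lambda_2$ or $\mu_1\equiv\lambda_2,\ \mu_2\equiv\lambda_1$.

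The second step is to extract the contradiction. In either of the two cases above, each $\lambda_j$ coincides identically with some $\mu_i$, which by hypothesis is $1$-periodic. Hence $\lambda_1$ and $\lambda_2$ are $1$-periodic. But by Agreement \ref{agreement}(i), the statement that $\lambda_1$ and $\lambda_2$ are $2$-periodic means that their minimal positive period equals $2$, so they cannot be $1$-periodic. This contradiction completes the proof.

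I do not expect a serious obstacle here; the main subtlety is simply to be careful that the word ``$2$-periodic'' is read with the minimality convention of Agreement \ref{agreement}(i), since otherwise the statement would be false (any $1$-periodic function is trivially $2$-periodic in the non-minimal sense). The use of simple spectrum is essential: without it the clopen-dichotomy step could fail at points where $\lambda_1(t)=\lambda_2(t)$, which is precisely what will make the analogous question near a GCP more delicate in the sequel.
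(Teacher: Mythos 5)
Your proof is correct and follows essentially the same route as the paper: the paper defines $S_1=\{t:\lambda_1(t)=\mu_1(t)\}$ and $S_2=\{t:\lambda_1(t)=\mu_2(t)\}$, observes they are closed, disjoint, and cover $\R$ (using the simple spectrum), invokes connectedness to conclude one is all of $\R$, and then derives a contradiction because $\lambda_1$ would be $1$-periodic, violating \eqref{eq:2perLabel}. Your presentation (showing $S$ directly clopen, and concluding via the minimality convention of Agreement \ref{agreement}(i) rather than via the swap relation) is a cosmetic variation of the same argument.
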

\begin{proof}
Observe that,
since the set of the eigenvalues must return into itself after one period, the two possibilities expressed in the statement of the Lemma 
are the only possibilities.	Moreover, they can be rewritten as follows. 
For the $2$-periodic continuous functions $\lam_1$ and $\lam_2$, they must satisfy
\begin{equation}\label{eq:2perLabel}
\lam_1(t+1)=\lam_2(t)\quad\text{and}\quad\lam_2(t+1)=\lam_1(t),\, {\text{for all}}\,\ t\in\R,
\end{equation}
whereas for the labeling $\mu_1, \mu_2$, we would have
\begin{equation}\label{eq:1perLabel}
	\mu_1(t+1)=\mu_1(t)\quad\text{and}\quad\mu_2(t+1)=\mu_2(t), \, {\text{for all}}\,\ t\in\R.
\end{equation}

Now, suppose that the two labelings in \eqref{eq:2perLabel} and \eqref{eq:1perLabel} are both valid.  Let
\begin{equation*}
 S_1=\{t\in\R: \lam_1(t)=\mu_1(t)\},\ S_2=\{t\in\R: \lam_1(t)=\mu_2(t)\}.
\end{equation*}
Note that $S_1$ and $S_2$ are closed, disjoint, and such that $S_1\cup S_2=\R$. Therefore, either $S_1=\R$ and $S_2=\emptyset$, or viceversa. In either case, $\lam_1$ would have period $1$, which contradicts \eqref{eq:2perLabel}.
\end{proof}

\begin{rem}
Henceforth, we will call {\bf{continuous labeling of the eigenvalues}} a labeling of the eigenvalues for which they are continuous functions.
Lemma \ref{lem:per1or2} clarifies that, for a continuous $1$-periodic $A$, having distinct eigenvalues for all $t$, the existence of a $2$-periodic continuous labeling of the eigenvalues rules out the existence of a continuous $1$-periodic one. In other words, if one can produce a $2$-periodic continuous labeling of the eigenvalues, then all continuous labelings must be $2$-periodic. This observation will play a key role in the proof of the next theorem.
\end{rem}\color{black}

\begin{thm}\label{thm:2cycle_2x2}
Let $A\in \cont^k(\Omega,\Ctxt)$, $k\ge1$, with $\Omega$ an open convex subset of $\R^2$. 
Let $\xi_0\in\Omega$ be a GCP, and suppose that $A(\xi)$ has distinct eigenvalues everywhere else in $\Omega$. Let $\Gamma$ be a Jordan curve encircling the point $\xi_0$, parametrized by $\gamma$ in such way\footnote{This is always possible for a continuous non-constant matrix-valued function, see Appendix~\ref{appendix:period}.} that both $\gamma(\cdot)$ and $A(\gamma(\cdot))$ are $1$-periodic. Finally, let $\lam_1(t),\lam_2(t)$ be a continuous labeling of the eigenvalues of $A(\gamma(t))$ along $\Gamma$. 
Then, for all $t\in\R$, we have
 \begin{equation}\label{eq:2cycle_2x2}
 \lam_1(t+1)= \lam_2(t)\quad\text{ and }\quad\lam_2(t+1)= \lam_1(t).
\end{equation}
\end{thm}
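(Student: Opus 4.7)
The strategy is to convert the question about eigenvalue labeling into a winding-number computation for the scalar-valued discriminant
\[
\Delta(\xi)\dfn (a(\xi)-d(\xi))^2+4b(\xi)c(\xi),
\]
which, for $n=2$, vanishes precisely at points where the eigenvalues coalesce. Set $\sigma(t)\dfn\lam_1(t)+\lam_2(t)$ and $f(t)\dfn\lam_1(t)-\lam_2(t)$. Since $\sigma(t)=\trace A(\gamma(t))$ and $A\circ\gamma$ is $1$-periodic, $\sigma$ is automatically $1$-periodic. Hence \eqref{eq:2cycle_2x2} is equivalent to showing $f(t+1)=-f(t)$ for all $t$.

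Because $f^2=\Delta\circ\gamma$ is $1$-periodic, one has $f(t+1)^2=f(t)^2$. Moreover, $f$ is continuous and nowhere zero on $\R$: it is continuous since $\lam_1,\lam_2$ are continuous labelings, and it is nonzero because $\xi_0$ is by hypothesis the only point of $\Omega$ where the eigenvalues coalesce, and $\gamma$ avoids $\xi_0$. Therefore $t\mapsto f(t+1)/f(t)$ is a continuous function from $\R$ into $\{+1,-1\}$, hence constant, and so there exists $\eps\in\{-1,+1\}$ with $f(t+1)=\eps f(t)$ for all $t$. The task reduces to ruling out $\eps=+1$. If $\eps=+1$, then $f$ is itself $1$-periodic and the winding number $w$ of the loop $t\mapsto \Delta(\gamma(t))$ around $0\in\C$ would equal $2\,\mathrm{wind}(f,0)$, in particular an even integer.

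I will then show $w=\pm1$, producing the contradiction. Since $\Omega$ is convex and $\Gamma\subset\Omega$, the convex hull of $\Gamma$ lies in $\Omega$, so by the Jordan curve theorem the bounded component of $\R^2\setminus\Gamma$ is contained in $\Omega$; by hypothesis, this bounded component meets the zero set of $\Delta$ only at $\xi_0$. Consequently $\gamma$ can be continuously deformed, within $\Omega\setminus\{\xi_0\}$, to an arbitrarily small Jordan curve around $\xi_0$, and under this homotopy $w$ is preserved. Near $\xi_0$, the GCP hypothesis states that the Jacobian $DF(\xi_0)$ of $F=(\Real\Delta,\Img\Delta)$ is invertible, so $F$ is a local diffeomorphism at $\xi_0$; it follows by standard local-degree theory that a small positively oriented loop around $\xi_0$ is sent by $F$ to a loop around $0$ with winding number $\mathrm{sign}(\det DF(\xi_0))\in\{+1,-1\}$. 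Hence $w=\pm1$, contradicting the evenness derived above, and so $\eps=-1$. Combined with $\sigma(t+1)=\sigma(t)$, the desired identities
\[
\lam_1(t+1)=\tfrac{1}{2}(\sigma-f)(t)=\lam_2(t),\qquad \lam_2(t+1)=\lam_1(t)
\]
follow immediately.

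The main technical point is the winding-number identification, which requires (a) using convexity of $\Omega$ together with the Jordan curve theorem to guarantee that the interior of $\Gamma$ sits inside $\Omega$, so that the homotopy in $\Omega\setminus\{\xi_0\}$ is available, and (b) converting the non-degeneracy condition $\det DF(\xi_0)\neq 0$ into a local-degree statement for $F$ at $\xi_0$, which is the only place the GCP hypothesis actually enters the proof. Everything else amounts to bookkeeping of continuous square roots.
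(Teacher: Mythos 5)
Your proof is correct, and it takes a genuinely different route from the paper's. The paper's proof works by first shrinking the loop into a small neighborhood $U$ of $\xi_0$ where the Inverse Function Theorem applies, writing $F(\sigma\gamma(t))=r(t)(\cos\theta(t),\sin\theta(t))$ with $\theta(t+1)=\theta(t)+2\pi$, and then explicitly exhibiting a $2$-periodic continuous labeling via $\pm\frac12\sqrt{r(t)}\,e^{i\theta(t)/2}$ (plus half the trace). It then transports this labeling out to the original loop $\Gamma$ via the homotopy $M(t,s)=A(s\gamma(t))$ together with the block-decomposition Theorem~\ref{thm:blockdec}, and finally invokes the dichotomy Lemma~\ref{lem:per1or2} to conclude that the given (a priori arbitrary) continuous labeling must also be a $2$-cycle. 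You instead work directly with the given labeling, observe that $f\dfn\lam_1-\lam_2$ is a continuous nowhere-vanishing square root of $\Delta\circ\gamma$ so that $f(t+1)=\eps f(t)$ for a constant sign $\eps$, and then pin down $\eps=-1$ from the parity of the winding number of $\Delta\circ\gamma$ about the origin. This is cleaner in two respects: it bypasses Lemma~\ref{lem:per1or2} entirely (the parity argument handles all continuous labelings at once), and it replaces the explicit polar-coordinate construction of the branches by a single homotopy-invariant integer. What the two proofs share, and where the GCP hypothesis enters in both, is the identification of that winding number as $\mathrm{sign}\det DF(\xi_0)=\pm1$: the paper encodes it in the normalization $\theta(t+1)=\theta(t)+2\pi$ after applying the Inverse Function Theorem, while you extract it via the local degree of $F$ at $\xi_0$. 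The two approaches are therefore equivalent in substance but yours is more economical; the paper's version has the modest advantage that its Lemma~\ref{lem:per1or2} and the homotopy-with-block-decomposition machinery are reused in the later $n\times n$ theorems.
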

\begin{proof}  
Without loss of generality, we take $\xi_0$ to be the origin.
Let $F$ be given by \eqref{eq:GCP_def_2x2}.
By the Inverse Function Theorem, there exist an open neighborhood $U$ of $\xi_0$ and an open neighborhood 
$W\subset\R^2$ of the origin such that $F$ is one-to-one on $U$ and $F(U)=W$. 
Let $0<\sigma\le1$ be small enough so that $\sigma\Gamma\subset U$. Then, since $F$ is continuous and one-to-one on $U$,
$F(\sigma\Gamma)$ is a Jordan curve encircling the origin parametrized by $F(\sigma\gamma(t))$, which is 1-periodic and --without loss of generality-- can be written as
\begin{equation*}
	F(\sigma\gamma(t))=r(t)\bmat{\cos(\theta(t)) \\ \sin(\theta(t))},\quad t\in\R,
\end{equation*}
where $r$ and $\theta$ are continuous and satisfy $r(t+1)=r(t)>0$,  and $\theta(t+1)=\theta(t)+2\pi$.
A straightforward computation shows that the following expressions yield continuous 2-periodic parametrizations of the two eigenvalues of $A(\sigma\gamma(t))$ that satisfy \eqref{eq:2cycle_2x2}:
\begin{equation*}
	\begin{split}
		\tilde\lam_1(t)& ={\frac 12}\trace(A(\sigma\gamma(t)))+\frac 12\sqrt{r(t)}(\cos(\theta(t)/2)+i\sin(\theta(t)/2)), \\
		\tilde\lam_2(t)& =\frac 12\trace(A(\sigma\gamma(t)))-\frac 12\sqrt{r(t)}\,(\cos(\theta(t)/2)+i\sin(\theta(t)/2))\ .
	\end{split}
\end{equation*}
Now, consider the matrix-valued function 
$$M: (t,s)\in R=[0,1]\times[\sigma,1]\mapsto A(s\gamma(t))\in\Ctxt\ ,$$
which is continuous and has distinct eigenvalues for all $(t,s)\in R$. By Theorem \ref{thm:blockdec}, $M(t,s)$ has eigenvalues $E_1(t,s), E_2(t,s)$ which are continuous everywhere on $R$. Assume, without loss of generality, that $E_j(\cdot,\sigma)=\tilde\lam_j(\cdot)$, $j=1, 2$. Recall that these eigenvalues satisfy \eqref{eq:2cycle_2x2}. This property clearly carries by continuity up to the eigenvalues $E_1(\cdot,1), E_2(\cdot,1)$ of $M(\cdot,1)=A(\gamma(t))$. Finally, Lemma \ref{lem:per1or2} ensures that any continuous labeling of the eigenvalues of $A(\gamma(t))$ must satisfy \eqref{eq:2cycle_2x2}, and this concludes the proof.
\end{proof}

\begin{rem}
The essence of Theorem \ref{thm:2cycle_2x2} is that, upon continuation around a simple loop that encircles a GCP, the coalescing eigenvalues
undergo a \emph{$2$-cycle}\footnote{That is, the elements of the set ${\lambda_1(t), \lambda_2(t)}$ undergo a cyclic permutation as $t$ increases by one period. See \cite{Kato} for further details and for the general definition of $p$-cycles in this context.}. This is essentially a manifestation of a branch point for the multi-valued function $\xi\mapsto \sqrt{\Delta(\xi)}$, see \cite[p.65]{Kato}, which is the very reason precluding being able to label the eigenvalues as continuous functions of $\xi$ in a neighborhood of a GCP.
\end{rem}

\begin{rem}\label{rem:homotopy}
The assumption in Theorem \ref{thm:2cycle_2x2} of $\Omega$ being convex allows us to adopt the linear homotopy $s\Gamma$ with the guarantee to remain in $\Omega$.  At the same time, one may weaken the assumption on $\Omega$ asking it to be just open and simply connected and then adopt a homotopy argument like the one we adopted in \cite{DiPuSVD}; see \cite[Theorem 2.2 and Remark 2.5]{DiPuSVD}.  This extra layer of complexity notwithstanding, the key fact implied by Theorem \ref{thm:2cycle_2x2} is that the periodicity of the eigenvalues along a Jordan curve remains unchanged under any continuous deformation of the curve, provided that no eigenvalues coalesce during the deformation.
\end{rem}

Next, we look at results about phase accumulation of the eigenvectors as we complete a loop in parameter space enclosing -or not enclosing- a GCP.

\begin{lem}\label{lem:alpha_trace_det}
Let $A\in\cont^k(\R,\Cnxn)$, $k\ge 1$, be $1$-periodic and have distinct eigenvalues for all $t$. Let $A(t)=V(t)\Lam(t)V\inv(t)$ be a $\cont^k$ eigendecomposition of $A$ satisfying \eqref{eq:realP} and \eqref{eq:imgP}.  Let $\Phi(t)=\diag(e^{i\alpha_j(t)},j=1,\ldots,n)$, where $\alpha_j\in\cont^k(\R,\R)$ for all $j$, and let $\Pi$ be a permutation matrix such that
\begin{equation}\label{eq:alpha_trace_det}
    V(t+1)=V(t)\Pi\Phi(t), \text{ for all } t\in\R.
\end{equation}
Then, we have:
\begin{enumerate}[(i)]
    \item $\alpha_j(t)$ is constant for all $j$ and for all $t$;
    \item $\trace(P(V(t+1)))=\trace(P(V(t)))$ for all $t$, where $P(V(t))=V\inv(t)\dot V(t)$, see \eqref{eq:diffeq};
    \item $\det(V(t+1))=\det(V(t))$ for all $t$;
    \item for all $t$, $\sum\limits_{j=1}^n\alpha_j=0, \mod \pi$.
\end{enumerate}
\end{lem}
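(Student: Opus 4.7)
The plan is to differentiate the defining identity $V(t+1)=V(t)\Pi\Phi(t)$, substitute the ODE $\dot V=VP$, and then extract each conclusion from the diagonal of $P$ using the normalization conditions \eqref{eq:realP}--\eqref{eq:imgP}. Concretely, differentiating in $t$ and replacing $\dot V(\cdot)$ by $V(\cdot)P(\cdot)$ on both sides, then cancelling the invertible factor $V(t)\Pi\Phi(t)$ on the left, I expect to obtain
\[
P(t+1)=\Phi(t)^{-1}\Pi^{-1}P(t)\Pi\,\Phi(t)+\Phi(t)^{-1}\dot\Phi(t).
\]

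For (i), I would read off the $(j,j)$ entries of this equality. Since $\Phi$ is diagonal, conjugation by $\Phi$ preserves diagonal entries, and if $\pi$ denotes the permutation underlying $\Pi$, then $(\Pi^{-1}P\Pi)_{jj}=P_{\pi(j),\pi(j)}$; the remaining term contributes $i\dot\alpha_j(t)$. Thus $P(t+1)_{jj}=P(t)_{\pi(j),\pi(j)}+i\dot\alpha_j(t)$, and since both diagonal entries are real by \eqref{eq:imgP}, taking imaginary parts forces $\dot\alpha_j(t)\equiv 0$. With $\dot\Phi\equiv 0$ in hand, the displayed identity reduces to $P(t+1)=\Phi^{-1}\Pi^{-1}P(t)\Pi\,\Phi$, and cyclic invariance of the trace yields (ii) immediately.

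For (iii) and (iv), I would invoke the standard identity $\tfrac{d}{dt}\det V=(\trace P)\det V$. The normalizations \eqref{eq:realP} and \eqref{eq:imgP} make every $P_{jj}$ real (real part fixed by norm preservation, imaginary part forced to vanish by parallel transport), hence $\trace P\in\R$; consequently
\[
\frac{\det V(t+1)}{\det V(t)}=\exp\!\left(\int_{t}^{t+1}\trace P(s)\,ds\right)
\]
is a positive real number. On the other hand, from $V(t+1)=V(t)\Pi\Phi$ (with $\Phi$ now constant) this same ratio equals $\det\Pi\,\det\Phi=(\pm 1)\,e^{i\sum_j\alpha_j}$, a complex number of unit modulus. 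Being simultaneously positive real and of modulus one forces it to equal exactly $1$: this gives $\det V(t+1)=\det V(t)$, which is (iii), and $e^{i\sum_j\alpha_j}=\pm 1$, i.e., $\sum_j\alpha_j\equiv 0\pmod\pi$, which is (iv).

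No single step is technically hard; the one place that needs care is the bookkeeping in the first calculation, namely verifying that the permutation ends up on the correct side and that the diagonal entries of $\Pi^{-1}P\Pi$ really are diagonal entries of $P$. The conceptually pleasing point is the last step, where the reality of $\trace P$ together with the unit-modulus form of $\det\Pi\det\Phi$ collapses both (iii) and (iv) into the single observation that a positive real of modulus one can only equal $1$.
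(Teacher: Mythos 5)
Your proof is correct. Parts (i) and (ii) follow the paper's argument verbatim: differentiate \eqref{eq:alpha_trace_det}, conjugate, read off the diagonal, and use \eqref{eq:imgP} to kill $\dot\alpha_j$. For (iii) and (iv) you take a genuinely different route. The paper first proves (iii) by showing $\det V$ is bounded --- via Hadamard's inequality together with the constant column norms enforced by \eqref{eq:realP} --- and arguing that a bounded solution of $\frac{d}{dt}\det V=(\trace P)\det V$ with $1$-periodic coefficient must itself be $1$-periodic; (iv) then follows by taking determinants. You instead observe that \eqref{eq:imgP} alone makes every $P_{jj}$, hence $\trace P$, real, so the ratio $\det V(t+1)/\det V(t)=\exp\bigl(\int_t^{t+1}\trace P\bigr)$ is a positive real, while the algebraic relation $V(t+1)=V(t)\Pi\Phi$ forces the same ratio to have modulus one; both (iii) and (iv) drop out of the single identity $\det\Pi\,e^{i\sum_j\alpha_j}=1$. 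Your version buys two things: it dispenses with \eqref{eq:realP} and the Hadamard bound entirely, and it closes a small looseness in the paper's dichotomy (``$1$-periodic or unbounded''), since a ratio of unit modulus but nontrivial phase would give a bounded, non-periodic $\det V$; it is the reality of $\trace P$, which you make explicit, that rules this out. The paper's version, in exchange, exhibits the boundedness mechanism that motivates the normalization \eqref{eq:realP}.
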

\begin{proof}
Differentiating \eqref{eq:alpha_trace_det}, we obtain
\begin{equation}\label{eq:alpha_trace_det_proof}
P(V(t+1))=\Phi\inv(t)\Pi^TP(V(t))\Pi\Phi(t)+i\diag(\dot\alpha_j(t),j=1,\ldots,n).
\end{equation}
Since $\Phi\inv(t)\Pi^TP(V(t))\Pi\Phi(t)$ and $P(V(t))$ share the same diagonal entries (up to permutation), using \eqref{eq:imgP}, we have that $\dot\alpha_j(t)=0$ for all $j=1,\ldots,n$, and therefore \emph{(i)} holds. Given \emph{(i)}, \emph{(ii)} follows trivially from \eqref{eq:alpha_trace_det_proof}.
Now, Liouville's formula gives:
\begin{equation*}
\ddt \log(\det(V(t)))=\trace(P(V(t)))\ .
\end{equation*}
Therefore, we can write
\begin{equation*}
\det(V(t))=\det(V(0))\,e^{\int\limits_0^t \trace(P(V(\tau)))\,d\tau}\ .
\end{equation*}
Being $\trace(P(V(t)))$ $1$-periodic, $\det(V(t))$ must be either $1$-periodic or unbounded. But $\det(V(t))$ cannot be unbounded because \eqref{eq:realP} implies that
\begin{equation*}
\abs{\det(V(t))}\le \norm{v_1(t)}_2\cdots\norm{v_n(t)}_2=\norm{v_1(0)}_2\cdots\norm{v_n(0)}_2\,,
\end{equation*}
for all $t\in\R$. Therefore, $\det(V(t))$ is $1$-periodic, and hence \emph{(iii)}. Finally, we obtain \emph{(iv)} by taking determinants in \eqref{eq:alpha_trace_det} and using \emph{(iii)}.
\end{proof}
Using the above Lemma, we now show that
\begin{thm}\label{thm:phasesum_2x2}
Let $A\in\cont^k(\R,\Ctxt)$, $k\ge 1$, be $1$-periodic and with distinct eigenvalues for all $t$. Let $A(t)=V(t)\Lam(t)V\inv(t)$ be a $\cont^k$ eigendecomposition of $A$ satisfying \eqref{eq:realP} and \eqref{eq:imgP}. Moreover, let $\Lam(t)=\diag(\lam_1(t),\lam_2(t))$ and suppose that $\lam_1(t)$ and $\lam_2(t)$ satisfy \eqref{eq:2perLabel}, that is:
\begin{equation*}
\lam_1(t+1)=\lam_2(t)\quad\text{and}\quad\lam_2(t+1)=\lam_1(t)\ ,\,\ \text{for all}\,\  t\in\R. 
\end{equation*}
Then, we have
\begin{equation}\label{eq:mainres_2x2}
V\inv(t)V(t+1)=\bmat{0 & e^{i\alpha_2} \\ e^{i\alpha_1} & 0} ,\,\ \text{for all}\,\  t\in\R ,
\end{equation}
where $\alpha_j\in\R,\ j=1,2$, satisfy
\begin{equation}\label{eq:sum_is_pi_2x2}
\alpha_1+\alpha_2=\pi, \mod 2\pi.
\end{equation}
\end{thm}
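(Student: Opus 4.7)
The plan is to first use the $1$-periodicity of $A$ together with the $2$-cycle \eqref{eq:2perLabel} of the eigenvalues to pin down the structural form $V^{-1}(t)V(t+1) = \Pi\Phi(t)$, where $\Pi = \smat{0 & 1 \\ 1 & 0}$ is the $2\times 2$ swap and $\Phi(t) = \diag(e^{i\alpha_1(t)}, e^{i\alpha_2(t)})$, and then apply Lemma \ref{lem:alpha_trace_det} to conclude both that each $\alpha_j$ is constant and that their sum equals $\pi$ modulo $2\pi$.

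For the structural step, start from $A(t+1)V(t+1) = V(t+1)\Lam(t+1)$ and use $A(t+1) = A(t)$ to get $A(t)V(t+1) = V(t+1)\Lam(t+1)$. Thus the $j$-th column $v_j(t+1)$ is an eigenvector of $A(t)$ for the eigenvalue $\lam_j(t+1)$. By \eqref{eq:2perLabel}, $\lam_1(t+1) = \lam_2(t)$ and $\lam_2(t+1) = \lam_1(t)$, and since the eigenvalues of $A(t)$ are simple, each eigenspace is one-dimensional, so there exist $\cont^k$ scalars $c_j(t)$ with $v_1(t+1) = c_1(t) v_2(t)$ and $v_2(t+1) = c_2(t) v_1(t)$. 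Equivalently, $V(t+1) = V(t)\Pi D(t)$ with $D(t) = \diag(c_1(t), c_2(t))$.

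Next I would show that $|c_j(t)| \equiv 1$. Since \eqref{eq:realP} keeps $\|v_j(\cdot)\|$ constant, taking norms in $v_1(t+1) = c_1(t)v_2(t)$ yields $|c_1(t)| = \|v_1\|/\|v_2\|$, a constant in $t$ (and similarly for $c_2$); under the standard initial unit-norm normalization, which \eqref{eq:realP} preserves, both $|c_j|$ equal $1$, so $c_j(t) = e^{i\alpha_j(t)}$ with $\alpha_j \in \cont^k(\R,\R)$. At this point the hypothesis $V(t+1) = V(t)\Pi\Phi(t)$ of Lemma \ref{lem:alpha_trace_det} is in force with our specific $\Pi$, and part (i) of that lemma immediately gives that each $\alpha_j$ is constant, establishing \eqref{eq:mainres_2x2}.

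Finally, to prove \eqref{eq:sum_is_pi_2x2}, take determinants in $V(t+1) = V(t)\Pi\Phi$ and use Lemma \ref{lem:alpha_trace_det}(iii), namely $\det V(t+1) = \det V(t)$, to obtain $\det(\Pi)\det(\Phi) = 1$. Since $\det(\Pi) = -1$ and $\det(\Phi) = e^{i(\alpha_1+\alpha_2)}$, this forces $-e^{i(\alpha_1+\alpha_2)} = 1$, i.e. $\alpha_1 + \alpha_2 = \pi \mod 2\pi$. The main obstacle is the clean derivation of the anti-diagonal structure in the first step, which essentially combines $1$-periodicity of $A$, the $2$-cycle of the eigenvalues, and simplicity of the spectrum; note also that Lemma \ref{lem:alpha_trace_det}(iv) alone would only give the weaker congruence $\alpha_1 + \alpha_2 = 0 \mod \pi$, and the refinement to $\mod 2\pi$ crucially exploits the specific sign $\det(\Pi) = -1$ of the $2\times 2$ swap permutation.
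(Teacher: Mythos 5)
Your proof is correct and follows essentially the same route as the paper: intertwining via $A(t+1)=A(t)$ plus the $2$-cycle forces $V^{-1}(t)V(t+1)$ to be anti-diagonal, \eqref{eq:realP} controls the moduli, Lemma \ref{lem:alpha_trace_det}(i) gives constancy of the $\alpha_j$, and determinants with Lemma \ref{lem:alpha_trace_det}(iii) yield the $\bmod\,2\pi$ congruence. You are a bit more explicit than the paper in flagging that unimodularity of the anti-diagonal entries relies on the initial columns having equal (unit) norms, and in noting that Lemma \ref{lem:alpha_trace_det}(iv) alone would only give $\bmod\,\pi$ — both useful clarifications, but the underlying argument is the same.
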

\begin{proof} 
Since $V(t+1)\Lambda(t+1)V^{-1}(t+1)=V(t)\Lambda(t)V^{-1}(t)$, then
$(V^{-1}(t)V(t+1))\bmat{\lam_1(t+1) & 0 \\ 0 & \lam_2(t+1)}=\bmat{\lam_1(t) & 0 \\ 0 & \lam_2(t)}(V^{-1}(t)V(t+1))$ and thus
\eqref{eq:2perLabel}  implies that $(V^{-1}(t)V(t+1))\bmat{\lam_2(t) & 0 \\ 0 & \lam_1(t)}=\bmat{\lam_1(t) & 0 \\ 0 & \lam_2(t)}(V^{-1}(t)V(t+1))$ and, since the eigenvalues are distinct, then $V^{-1}(t)V(t+1)$ has the form $\bmat{0 & \texttt{x} \\ \texttt{x} & 0}$.  But, because of \eqref{eq:realP}, the columns of $V(t)$ have constant length, and therefore
\begin{equation}\label{eq:V_of_t_plus_one}
V(t+1)=V(t)\bmat{0 & e^{i\alpha_2(t)} \\ e^{i\alpha_1(t)} & 0}
\end{equation}
for some $\cont^k$ real valued functions $\alpha_j(t),\ j=1,2$, and for all $t\in\R$. Then, Lemma \ref{lem:alpha_trace_det} implies that $\alpha_j(t),\ j=1,2$, are constant with respect to $t$. Taking determinants in \eqref{eq:V_of_t_plus_one}, and using \emph{(iii)} of Lemma \ref{lem:alpha_trace_det}, we have $-e^{i(\alpha_1+\alpha_2)}=1$ and this concludes the proof.
\end{proof}
The following result follows at once from Theorems \ref{thm:2cycle_2x2} and \ref{thm:phasesum_2x2} and does not require a proof.
\begin{thm}\label{thm:mainres_2x2}
Let $A\in\cont^k(\Omega,\Ctxt)$, $k\ge1$, where $\Omega$ is open and convex subset of $\R^2$. Let $\xi_0\in\Omega$ be a GCP, and suppose that $A(\xi)$ has distinct eigenvalues everywhere else in $\Omega$. Let $\Gamma$ be a smooth Jordan curve encircling the point $\xi_0$, parametrized by $\gamma$ in such way that both $\gamma(\cdot)$ and $A(\gamma(\cdot))$ are $1$-periodic. Finally, let $A(\gamma(t))=V(t)\Lam(t)V\inv(t)$ be a $\cont^k$ eigendecomposition of $A$ satisfying \eqref{eq:realP} and \eqref{eq:imgP}.
Then, for all $t\in\R$, we have:
\begin{enumerate}[(i)]
 \item $\lam_1(t+1) = \lam_2(t)$, $\lam_2(t+1)= \lam_1(t)$;
 \item $V\inv(t)V(t+1) = \bmat{0 & e^{i\alpha_2} \\ e^{i\alpha_1} & 0}$, with $\alpha_1+\alpha_2=\pi,\, \mod 2\pi$.\qed
\end{enumerate}
\end{thm}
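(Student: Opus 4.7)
The plan is to observe that this theorem is essentially a conjunction of the two preceding theorems, applied to the auxiliary one-variable matrix-valued function $B(t)\dfn A(\gamma(t))$. The two component theorems have been crafted so that their hypotheses chain together cleanly when one restricts $A$ to a loop around an isolated GCP, so the proof reduces to matching conclusions with hypotheses.

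First I would verify the setup. Since $\gamma$ is a smooth $1$-periodic parametrization of $\Gamma$ and $A\in\cont^k(\Omega,\Ctxt)$ with $k\ge 1$, the composition $B=A\circ\gamma$ belongs to $\cont^k(\R,\Ctxt)$ and is $1$-periodic by construction. By hypothesis, $\xi_0$ is the only point of $\Omega$ at which $A$ is degenerate, and $\xi_0$ lies in the interior region enclosed by the Jordan curve $\Gamma$, so $\xi_0\notin\Gamma$; therefore $B(t)=A(\gamma(t))$ has simple spectrum for every $t\in\R$. These are exactly the standing hypotheses of Theorem \ref{thm:2cycle_2x2}, whose conclusion yields statement (i) for every continuous labeling of the eigenvalues of $B$, and in particular for the $\cont^k$ labeling supplied by the prescribed eigendecomposition $A(\gamma(t))=V(t)\Lam(t)V\inv(t)$.

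Next, with (i) established, I would invoke Theorem \ref{thm:phasesum_2x2} applied to the same $B$. All of its hypotheses are in place: $B$ is $\cont^k$, $1$-periodic, and has distinct eigenvalues at every $t$; the eigendecomposition inherits the gauge conditions \eqref{eq:realP} and \eqref{eq:imgP} by assumption; and part (i) above is precisely the relation \eqref{eq:2perLabel} demanded in that theorem. Its conclusion is verbatim statement (ii): $V\inv(t)V(t+1)$ has antidiagonal form with unimodular entries $e^{i\alpha_2}$ and $e^{i\alpha_1}$, satisfying $\alpha_1+\alpha_2=\pi,\,\mod 2\pi$.

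There is no substantive obstacle here; this is why the author notes the theorem requires no proof. The only item that deserves explicit mention is that simple spectrum of $B$ on all of $\R$ relies on the twin assumptions that $\xi_0$ is the \emph{unique} degenerate point of $A$ in $\Omega$ and that $\Gamma$ \emph{encircles} $\xi_0$ (so $\Gamma$ does not pass through any coalescing point). These are built into the statement, so the chaining of the two earlier theorems is unobstructed.
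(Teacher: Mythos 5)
Your proposal is correct and follows exactly the route the paper intends: the paper states that this theorem ``follows at once from Theorems \ref{thm:2cycle_2x2} and \ref{thm:phasesum_2x2} and does not require a proof,'' and your write-up simply makes explicit the hypothesis-checking that chains those two results together. No gaps.
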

\begin{rem} We note that, with the same hypotheses and notations of Theorem \ref{thm:mainres_2x2}, we have
\begin{align*}
    \lam_1(t+2) & =\lam_1(t),\quad \lam_2(t+2)=\lam_2(t),\\
    V(t+2) & =V(t)
    \bmat{e^{i(\alpha_1+\alpha_2)} & 0 \\ 0 & e^{i(\alpha_1+\alpha_2)}}=-V(t),
\end{align*}
and therefore
\begin{equation*}
    \Lam(t+4)=\Lam(t),\quad V(t+4)=V(t).
\end{equation*}
In other words, if a loop $\Gamma$ encloses exactly one GCP (which is the only point of coalescence of eigenvalues inside the region bounded by $\Gamma$), the smooth decomposition of $A(\gamma(t))$ around $\Gamma$ satisfying \eqref{eq:realP} and \eqref{eq:imgP} will have period 4. In contrast, when
$A$ is real symmetric --another notable case where coalescence of eigenvalues has codimension 2--  the factors of the smooth Schur (eigen)decomposition around a loop have period $1$ or $2$, again depending on the presence of eigenvalue coalescences inside the loop; see \cite{DiPuSVD}.
\end{rem}
The next result contrasts the previous Theorem \ref{thm:mainres_2x2} to the case when the eigenvalues of $A$ are distinct everywhere in $\Omega$, in which case they are $1$-periodic when $A$ is restricted to $1$-periodic loops.
\begin{thm}\label{thm:simple_eig} Let $A\in\cont^k(\Omega,\Cnxn)$, $k\ge1$, with $\Omega$ an open and convex subset of $\R^2$. Suppose that $A(\xi)$ has distinct eigenvalues everywhere in $\Omega$. Let $\Gamma$ be a smooth Jordan curve encircling the point $\xi_0$, parametrized by $\gamma$ in such way that both $\gamma(\cdot)$ and $A(\gamma(\cdot))$ are $1$-periodic. Finally, let $A(\gamma(t))=V(t)\Lam(t)V\inv(t)$ be a $\cont^k$ eigendecomposition of $A(\gamma(\cdot))$ satisfying \eqref{eq:realP} and \eqref{eq:imgP}.
Then, for all $j=1,\ldots,n$ and for all $t\in\R$, we have:
\begin{enumerate}[(i)]
 \item $\lam_j(t+1) = \lam_j(t)$;
 \item $v_j(t+1) = e^{i\alpha_j}\,v_j(t)$ for some (constant)  $\alpha_j\in\R$.
\item $\sum\limits_{j=1}^n \alpha_j=0, \, \mod \pi$.
\end{enumerate}
\end{thm}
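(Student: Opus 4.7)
The plan is to establish the three claims in sequence. For (i), I would follow the homotopy strategy of the proof of Theorem \ref{thm:2cycle_2x2}: pick any point $\xi_0$ enclosed by $\Gamma$ and consider $M(t,s)\dfn A\bigl(\xi_0+s(\gamma(t)-\xi_0)\bigr)$ on the rectangle $R=[0,1]\times[0,1]$, which takes values in $\Omega$ by convexity. Since $A$ has simple spectrum everywhere on $\Omega$, so does $M$ on all of $R$, and the full-diagonalization refinement of Theorem \ref{thm:blockdec} alluded to in Remark \ref{RemsGingold} then yields a continuous labeling $E_1(t,s),\dots,E_n(t,s)$ of the eigenvalues of $M(t,s)$ on $R$. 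At $s=0$, $M(\cdot,0)\equiv A(\xi_0)$ is constant in $t$, so $E_j(0,0)=E_j(1,0)$ trivially; by continuity of the $E_j$ and the simplicity of the spectrum (which prevents distinct labels from swapping), the equality $E_j(0,s)=E_j(1,s)$ propagates to every $s\in[0,1]$, and at $s=1$ it reads $\lam_j(0)=\lam_j(1)$, establishing (i).

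With (i) in hand, I would derive (ii) by exploiting the commutation that comes from $A(\gamma(t+1))=A(\gamma(t))$ together with $\Lam(t+1)=\Lam(t)$, which yields
\[
V\inv(t)V(t+1)\,\Lam(t)=\Lam(t)\,V\inv(t)V(t+1).
\]
Since $\Lam(t)$ is diagonal with distinct entries, $V\inv(t)V(t+1)$ is forced to be diagonal. The normalization \eqref{eq:realP} keeps the columns of $V$ of constant length, so its diagonal entries must be unimodular; writing them as $e^{i\alpha_j(t)}$ and applying Lemma \ref{lem:alpha_trace_det} with $\Pi=I$ and $\Phi(t)=\diag(e^{i\alpha_j(t)})$, part (i) of that lemma gives $\dot\alpha_j\equiv 0$, so each $\alpha_j$ is constant, proving (ii). Part (iii) is then immediate from Lemma \ref{lem:alpha_trace_det}(iv) (equivalently, from taking determinants in the identity of (ii) and combining with Lemma \ref{lem:alpha_trace_det}(iii)).

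The only mild obstacle is in step (i): Theorem \ref{thm:blockdec} as stated only separates two disjoint spectral groups, so to obtain a continuous labeling of all $n$ eigenvalues on $R$ I would either iterate it $n-1$ times (peeling off one eigenvalue at a time, which is possible because under the simple-spectrum hypothesis each eigenvalue is uniformly separated from the rest on the compact $R$) or directly appeal to the full diagonalization refinement noted in Remark \ref{RemsGingold}. Either way, the homotopy from the constant matrix at $s=0$ to $A(\gamma(\cdot))$ at $s=1$ transports the trivial permutation. Beyond this bookkeeping, parts (ii) and (iii) are essentially the $n\times n$ analogue of the final lines of the proof of Theorem \ref{thm:phasesum_2x2}, with Lemma \ref{lem:alpha_trace_det} doing the heavy lifting.
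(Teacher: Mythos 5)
Your proof is correct and follows essentially the same approach as the paper's: a homotopy argument built on the full-diagonalization refinement of Theorem~\ref{thm:blockdec} to establish $1$-periodicity of the eigenvalues, the commutation plus constant-column-length argument to get the diagonal unimodular form of $V^{-1}(t)V(t+1)$, and Lemma~\ref{lem:alpha_trace_det} to deliver constancy of the $\alpha_j$ and the $\bmod\,\pi$ sum. The only cosmetic difference is that you diagonalize $M(t,s)$ over the entire $(t,s)$-square and propagate the trivial permutation from $s=0$, whereas the paper diagonalizes $A$ on a small rectangle $R\subset\Omega$, observes trivial periodicity on a loop inside $R$, and transports it back via Remark~\ref{rem:homotopy}; these are interchangeable.
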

\begin{proof}
First, let $R \subset \Omega$ be a rectangular region. By Theorem~\ref{thm:blockdec}, there exists $T\in\cont^k(\Omega,\Cnxn)$ invertible and such that
$$
T^{-1}(\xi)A(\xi)T(\xi) = \operatorname{diag}(E_j(\xi),\ j=1,\ldots,n)
$$
for all $\xi \in R$, where each $E_j(\xi)$ is an eigenvalue of $A(\xi)$ and is a $\mathcal{C}^k$ function of $\xi$.\newline
Now, consider a homotopy that continuously deforms $\Gamma$ into $\tilde{\Gamma}$, with $\tilde{\Gamma}$ parametrized by $\tilde{\gamma}$ and fully contained within $R$. The eigenvalues $E_j(\tilde{\gamma}(t))$ of $A(\tilde{\gamma}(t))$ are clearly 1-periodic, and, by Remark~\ref{rem:homotopy}, the same must hold for the eigenvalues $E_j(\gamma(t))=\lambda_j(t)$ of $A(\gamma(t))$. This establishes point \emph{(i)} for each $\lambda_j$.\newline
Next, let $v_j(t)$ denote the $j$-th column of $V(t)$. Given that the eigenvalues $\lambda_j(t)$ of $A(\gamma(t))$ are 1-periodic, it follows that, for each $j=1,\ldots,n$,
$$
v_j(t+1) = e^{i\alpha_j(t)}v_j(t),
$$
for some real-valued $\mathcal{C}^k$ function $\alpha_j(t)$. The fact that each $\alpha_j$ is constant follows as for point \emph{(i)} of Lemma~\ref{lem:alpha_trace_det}.\newline
Finally, statement \emph{(iii)} follows from equation~\eqref{eq:alpha_trace_det}, with $\Pi$ being the identity matrix.
\end{proof}
\begin{rem}
A notable consequence of Theorem \ref{thm:simple_eig} is the following.  Suppose that $A$ has distinct eigenvalues along a 1-periodic loop, and we compute these eigenvalues along the loop enforcing \eqref{eq:realP} and \eqref{eq:imgP}.  Then, if any of these eigenvalues fail to be 1-periodic, there must be a point inside the loop where some eigenvalues of $A$ coalesce. This fact gives a sufficient condition to detect coalescence of eigenvalues of non-Hermitian matrices.
\end{rem}
\begin{rem}\label{rem:phase_accrued_around_a_loop} Under the assumptions of Theorem~\ref{thm:simple_eig}, point~\emph{(ii)} asserts that each eigenvector $v_j(t)$ of $A(\gamma(t))$ acquires a phase factor $e^{i\alpha_j}$ as $t$ completes one period. We will refer to $\alpha_j$ as the \emph{phase accrued by $v_j$} along the closed curve $\Gamma$.
\end{rem}

The following result is useful to obtain explicit formulas for the phases accrued by eigenvectors around loops, and asymptotics as the loops shrink down to points, and it is an extension of Lemma 2.1 of \cite{DP12herm} to the case of non-Hermitian matrix functions.
\begin{thm}\label{thm:phase_formula}
Let $A\in\cont^k(\R,\Cnxn)$, $k\ge 1$, be $1$-periodic and have distinct eigenvalues for all $t\in\R$. For a fixed labeling of the eigenvalues, let $\Lambda(t)={\diag}(\lam_i(t),\allowbreak i=1,\dots, n)$, for all $t$.  Let $A(t)=V(t)\Lambda(t)V\inv(t)=\widetilde V(t)\Lambda(t)\widetilde V\inv(t)$ be two $\cont^k$ eigendecompositions of $A(\cdot)$ such that
\begin{enumerate}[(i)]
    \item $V$ satisfies \eqref{eq:imgP};
    \item $\widetilde V(1)=\widetilde V(0)\Pi$ for a permutation $\Pi$ (the same as in \eqref{eq:phase_formula_statement} below);
    \item $\widetilde V(0)=V(0)\diag(c_i, \, c_i>0,\, i=1,\dots, n)$.
\end{enumerate}
Let $W\dfn V\inv$ and $\widetilde W\dfn\widetilde V\inv$. Write $V$ and $\widetilde V$ partitioned by columns, e.g. $V=\bmat{v_1, \ldots, v_n}$, and $W$ and $\widetilde W$ partitioned by rows, e.g. $W=\bmat{w_1^T \\ \vdots \\ w_n^T}$. Finally, let $\Phi=\diag(e^{i\alpha_j},\alpha_j\in \R, j=1,\ldots,n)$ and $\Pi$ be a permutation matrix as in \eqref{eq:alpha_trace_det} of Theorem \ref{lem:alpha_trace_det}, that is such that
\begin{equation}\label{eq:phase_formula_statement}
V\inv(0)V(1)=\Pi\Phi\ .
\end{equation}
Then, for all $j=1,\ldots,n$,
\begin{equation}\label{eq:phase_formula}
\alpha_j=-\int\limits_0^1\Img\Big(\widetilde w_j(t)^T \dot{\widetilde{v}}_j(t)\Big)\, dt, \mod 2\pi.
\end{equation}
\end{thm}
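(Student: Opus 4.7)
The plan is to exploit that $V$ and $\widetilde V$ diagonalize the same matrix with the same eigenvalue labeling, which forces them to differ by a diagonal factor, and then to show that the integrand $\Img(\widetilde w_j^T\dot{\widetilde v}_j)$ reduces to $\ddt \arg(d_j)$ once the parallel-transport condition~\eqref{eq:imgP} is invoked.

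First I would observe that $V(t)^{-1}\widetilde V(t)$ commutes with $\Lambda(t)$ for all $t$, and since $\Lambda(t)$ has distinct diagonal entries, this forces $\widetilde V(t)=V(t)D(t)$ for a $\cont^k$ diagonal matrix $D(t)=\diag(d_j(t))$ with each $d_j(t)\ne 0$. Hypothesis (iii) pins down $d_j(0)=c_j>0$. A short calculation then gives, column/row by column/row,
\begin{equation*}
\widetilde v_j(t)=d_j(t)\,v_j(t),\qquad \widetilde w_j(t)^T=\frac{1}{d_j(t)}\,w_j(t)^T,
\end{equation*}
and hence, using $w_j^Tv_j=1$ and $P_{jj}(t)=w_j(t)^T\dot v_j(t)$,
\begin{equation*}
\widetilde w_j(t)^T\dot{\widetilde v}_j(t)=P_{jj}(t)+\frac{\dot d_j(t)}{d_j(t)}.
\end{equation*}

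Next, I would invoke hypothesis (i), i.e. \eqref{eq:imgP}, which says $P_{jj}(t)\in\R$, to conclude
\begin{equation*}
\Img\bigl(\widetilde w_j(t)^T\dot{\widetilde v}_j(t)\bigr)=\Img\!\left(\frac{\dot d_j(t)}{d_j(t)}\right)=\ddt\theta_j(t),
\end{equation*}
where $\theta_j$ is the continuous branch of $\arg d_j$ with $\theta_j(0)=0$ (well-defined since $d_j(0)=c_j>0$). Integrating from $0$ to $1$,
\begin{equation*}
\int_0^1 \Img\bigl(\widetilde w_j(t)^T\dot{\widetilde v}_j(t)\bigr)\,dt=\theta_j(1)-\theta_j(0)=\theta_j(1).
\end{equation*}

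The last step is to identify $\theta_j(1)$ with $-\alpha_j \mod 2\pi$. Combining hypothesis (ii), $\widetilde V(1)=\widetilde V(0)\Pi$, with $\widetilde V(t)=V(t)D(t)$ and $V(1)=V(0)\Pi\Phi$ (which is \eqref{eq:phase_formula_statement}), I get
\begin{equation*}
V(0)\Pi\Phi D(1)=V(0)\diag(c_j)\Pi,
\end{equation*}
so $\Phi D(1)=\Pi^T\diag(c_j)\Pi=\diag(c_{\pi(j)})$ for the permutation $\pi$ associated to $\Pi$. Comparing diagonal entries, $e^{i\alpha_j}d_j(1)=c_{\pi(j)}>0$, and taking arguments modulo $2\pi$ gives $\theta_j(1)\equiv -\alpha_j \mod 2\pi$. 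This yields~\eqref{eq:phase_formula}.

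The only genuinely delicate point is the bookkeeping at $t=1$: one must check that the permutation $\Pi$ appearing in (ii) is truly the same $\Pi$ as in \eqref{eq:phase_formula_statement}, and that the positivity of both $c_j$ and $c_{\pi(j)}$ is what makes the arguments of $d_j(0)$ and $e^{i\alpha_j}d_j(1)$ vanish, so that $\theta_j(1)$ captures exactly the phase $-\alpha_j$ modulo $2\pi$. Everything else is a direct computation driven by the parallel-transport normalization.
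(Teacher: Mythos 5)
Your proposal is correct and follows essentially the same route as the paper's proof: the paper writes $v_j=\rho_j e^{i\varphi_j}\widetilde v_j$ where you write $\widetilde v_j=d_j v_j$, but in both cases the argument is to differentiate the diagonal-scaling relation between the two eigendecompositions, take imaginary parts using \eqref{eq:imgP}, integrate over $[0,1]$, and identify the endpoint phase via hypotheses (ii) and (iii). The endpoint bookkeeping you flag as delicate is handled identically (and equally tersely) in the paper.
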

\begin{proof}
For any $j=1,\ldots,n$, we have
\begin{equation}\label{eq:phase_formula_proof_1}
    v_j(t)=\rho_j(t)e^{i\varphi_j(t)}\widetilde v_j(t),
\end{equation}
for some smooth real scalar valued functions $\rho_j> 0$ and $\varphi_j$. Differentiating and using \eqref{eq:phase_formula_proof_1} and $\widetilde w_j^T(t)=\rho_j(t)e^{i\varphi_j(t)}w_j^T(t)$, we obtain
\begin{equation}\label{eq:phase_formula_proof_2}
    w_j^T(t)\dot v_j(t)= \dfrac{\dot\rho_j(t)}{\rho_j(t)}+i\dot\varphi_j(t)+\widetilde w_j^T(t) \dot{\widetilde{v}}_j(t).
\end{equation}
Taking imaginary parts in \eqref{eq:phase_formula_proof_2}, using \eqref{eq:imgP}, and integrating, we get
\begin{equation*}
    \varphi_j(1)-\varphi_j(0)=-\int\limits_0^1\Img\Big(\widetilde w_j^T(t) \dot{\widetilde{v}}_j(t)\Big)\, dt.
\end{equation*}
The sought result follows from the realization that, because of \emph{(iii)}, $\varphi_j(0)$ can be taken to be zero, and that, because of \emph{(ii)}, \eqref{eq:phase_formula_proof_1}, and \eqref{eq:phase_formula_statement}, we have $\varphi_j(1)=\alpha_j, \mod 2\pi$.
\end{proof}
\begin{rem}\label{rem:singleval_eigenvec} Note that, if one can write
\begin{equation*}
    A(\xi)\widetilde V(\xi)=\widetilde V(\xi)\Lam(\xi),\ \xi\in\Omega\subset\R^2, 
\end{equation*}
where $\Lam(\xi)=\diag(\lam_j(\xi),j=1,\ldots,n)$, and takes a $\cont^k$, $k\ge 1$, and $1$-periodic $\gamma:t\in\R\mapsto\gamma(t)\in \Omega$, such that $\widetilde V(\gamma(t))$ is $\cont^k$ and invertible for all $t\in\R$, then $\widetilde V(\gamma(t))$ will automatically satisfy \emph{(ii)} of Theorem \ref{thm:phase_formula}. This observation will be useful in obtaining the next two results.
\end{rem}
\begin{rem}\label{rem:complexConj_negativePhase}
    It follows directly from \eqref{eq:phase_formula} that --as long as the eigendecomposition evolves according to \eqref{eq:imgP}-- the phases accrued by the eigenvectors of $\overline{A}$ around a loop are the negatives of those accrued by the eigenvectors of $A$ (when the loop is traversed with the same orientation). This observation will be used in Example~\ref{exm:phase_zero}.
\end{rem}
Below, we give results on the asymptotic of the phases accrued by the eigenvectors as loops in parameter space shrink down to points, that completely clarify the difference between the case when the loop encloses a GCP and the case when it does not. First, we consider the case of the loop not containing a GCP.
\begin{thm}\label{thm:loops2point_simple_eig}
    With the same notation and hypotheses as in Theorem \ref{thm:simple_eig}, consider the family of smooth Jordan curves $\Gamma_s$ parametrized by $s\gamma(t)$, where $s\in[0,1]$. For each $j=1,\ldots,n$, let $\alpha_j(s)$ be the phase accrued by the eigenvector $v_j$ along $\Gamma_s$. Then,
    \begin{equation}\label{eq:loops2point_simple_eig}
        \alpha_j(s)=\bigo(s),\mod 2\pi,\ \text{ as } s\rightarrow 0.
    \end{equation}
\end{thm}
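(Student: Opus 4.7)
The plan is to invoke the phase formula of Theorem~\ref{thm:phase_formula}, taking as the auxiliary eigendecomposition $\widetilde V$ a single-valued, globally $\cont^k$ eigenvector matrix of $A$ defined on a fixed neighborhood of the shrinking point, which (without loss of generality) I take to be the origin. Since $A$ has distinct eigenvalues everywhere in $\Omega$, the refinement of Theorem~\ref{thm:blockdec} recorded in Remark~\ref{RemsGingold} produces an invertible $\widetilde V\in\cont^k(R,\Cnxn)$ on a rectangle $R\subset\Omega$ containing the origin, such that $\widetilde V\inv(\xi)A(\xi)\widetilde V(\xi)=\Lam(\xi)$. For $s_0>0$ small enough that $s_0\gamma([0,1])\subset R$, the restriction $\widetilde V(s\gamma(\cdot))$ is, for every $s\in(0,s_0]$, a $\cont^k$ eigendecomposition of $A(s\gamma(\cdot))$ along $\Gamma_s$ which is automatically $1$-periodic in $t$, hence it meets hypothesis~(ii) of Theorem~\ref{thm:phase_formula} with $\Pi=I$ (see Remark~\ref{rem:singleval_eigenvec}). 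Since $\Pi=I$ also for $V$ by Theorem~\ref{thm:simple_eig}, post-multiplying $\widetilde V$ by a suitable constant diagonal matrix---an operation that preserves $\cont^k$-smoothness and commutes with $\Pi=I$---arranges hypothesis~(iii) as well.

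With these choices, Theorem~\ref{thm:phase_formula} gives
\begin{equation*}
\alpha_j(s)=-\int_0^1 \Img\Big(\widetilde w_j(s\gamma(t))^T\, \frac{d}{dt}\widetilde v_j(s\gamma(t))\Big)\,dt,\ \mod 2\pi\,.
\end{equation*}
The chain rule yields $\frac{d}{dt}\widetilde v_j(s\gamma(t))=s\bigl[\partial_x\widetilde v_j(s\gamma(t))\,\dot\gamma_1(t)+\partial_y\widetilde v_j(s\gamma(t))\,\dot\gamma_2(t)\bigr]$, producing the key factor of $s$. After pulling this $s$ outside of the integral, the remaining integrand is continuous in $(t,s)$ on the compact set $[0,1]\times[0,s_0]$, and therefore uniformly bounded by some constant $M$; hence $\alpha_j(s)=\bigo(s),\mod 2\pi$, as $s\to 0^+$.

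The only subtlety is verifying that hypotheses~(ii) and~(iii) of Theorem~\ref{thm:phase_formula} can be simultaneously arranged for the single-valued eigendecomposition $\widetilde V(s\gamma(\cdot))$; once that is in place, the bound is a direct consequence of the chain rule and of the uniform continuity of the relevant derivatives of $\widetilde V$ on $R$. The conceptual contrast with the GCP case is that here a single-valued smooth eigenvector matrix exists on a neighborhood of the shrinking point, which is exactly what forces the phase to be $\bigo(s)$; in the presence of a GCP, this single-valued construction fails (the eigenvectors branch according to $\sqrt{\Delta(\xi)}$), and a finite residual phase remains as $s\to 0$.
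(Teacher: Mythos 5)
Your proof is correct and follows essentially the same route as the paper's: both construct a single-valued $\cont^k$ eigenvector matrix $\widetilde V$ on a neighborhood of the origin (via Theorem~\ref{thm:blockdec} and its refinement, cf. Remark~\ref{rem:singleval_eigenvec}), apply the phase formula of Theorem~\ref{thm:phase_formula} with $\Pi = I$, and extract the factor of $s$ via the chain rule. The only cosmetic difference is that you conclude with a uniform bound on the integrand over a compact set, whereas the paper passes to the limit $s\to 0$ of $\alpha_j(s)/s$; both deliver $\bigo(s)$, and your explicit verification that hypotheses (ii) and (iii) of Theorem~\ref{thm:phase_formula} can be arranged (noting that a constant diagonal rescaling of $\widetilde V$ leaves the integral unchanged) is a careful touch that the paper leaves implicit.
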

\begin{proof} Since $A$ has distinct eigenvalues everywhere inside the region bounded by $\Gamma$, we can select, see Remark~\ref{rem:singleval_eigenvec}, the matrix of eigenvectors that do not accrue a phase around the loop, and thus we can write
$$A(s\gamma(t)) = \widetilde V(s\gamma(t))\Lambda(s\gamma(t))\widetilde V\inv(s\gamma(t))$$
smoothly with respect to $t$ and $s$, where we have $\widetilde V(s\gamma(1))=\widetilde V(s\gamma(0))$ for all $s\in[0,1]$. Therefore, we can use \eqref{eq:phase_formula} and write
\begin{equation*}
\alpha_j(s)=-\int\limits_0^1\Img\left(\widetilde w_j^T(s\gamma(t)) \pardt{\widetilde{v}}_j(s\gamma(t))\right)\,dt, \mod 2\pi,
\end{equation*}
for all $j=1,\ldots,n$, and $s\in[0,1]$. Applying the chain rule to $\pardt{\widetilde{v}}_j(s\gamma(t))$, dividing by $s\in(0,1]$, and taking the limit as $s\rightarrow 0$, we have
\begin{equation}
    \lim\limits_{s\rightarrow 0}\frac{\alpha_j(s)}{s}=\text{const.}
\end{equation}
where $\text{const.}$ depends on $\dot \gamma$, hence on the parametrization of $\gamma$, but not on $s$.
\end{proof}
The next result is when the loop encloses a GCP.
\begin{thm}\label{thm:loops2point_GCP}
    With the same notation and hypotheses as in Theorem \ref{thm:mainres_2x2}, consider the family of closed curves $\Gamma_s$ parametrized by $\gamma_s:t\in\R\mapsto \gamma_s(t)\dfn\xi_0+s(\gamma(t)-\xi_0)\in\Omega\subset\R^2$, where $s\in(0,1]$. 
    For $j=1,2$, let $\alpha_j(s)$ be the phase defined in point~(ii) of Theorem~\ref{thm:phasesum_2x2} when $\Gamma$ is replaced by $\Gamma_s$. Then,
    \begin{equation}\label{eq:loops2point_GCP}
        \alpha_1(s)=\alpha_2(s)=\pm\frac{\pi}{2}+\bigo(\sqrt{s}),\mod 2\pi,\ \text{ as } s\rightarrow 0.
    \end{equation}
\end{thm}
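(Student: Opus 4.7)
The plan is to reduce to the $2\times 2$ case via the block decomposition, then to construct an explicit non-parallel-transported eigenbasis $\widetilde V(t)$ along $\Gamma_s$ using an explicit branch of $\sqrt{\Delta}$, apply Theorem~\ref{thm:phase_formula} to express each $\alpha_j(s)$ as a line integral, and finally show that this integral splits into a logarithmic-derivative piece yielding the leading $\pm\pi/2$, plus a remainder which is $\bigo(\sqrt{s})$.

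More concretely, Definition~\ref{def:cuspPoint}(ii) and Theorem~\ref{thm:blockdec} provide, on a rectangle around $\xi_0$ containing $\Gamma_s$ for all sufficiently small $s$, a smooth block-diagonalization $A=T\diag(E_1,E_2)T^{-1}$ with $E_1\in\cont^k(\cdot,\Ctxt)$ carrying the coalescing eigenvalues. Since $T$ is smooth and single-valued, the phases $\alpha_j(s)$ for $A$ agree, modulo an $\bigo(\sqrt{s})$ correction, with the phases accrued by the eigenvectors of $E_1$; hence it suffices to treat $n=2$. A constant similarity and the trace shift $A\to A-\tfrac12\trace(A)I$, neither of which changes an eigenvector's phase, together with the fact that the GCP condition forces $DF(\xi_0)$ invertible and hence (by a short computation) $A(\xi_0)\ne\lam_0 I$, allow me to further normalize so that $A(\xi_0)=\smat{0 & 1 \\ 0 & 0}$, i.e.\ $a(\xi_0)=d(\xi_0)=c(\xi_0)=0$ and $b(\xi_0)=1$. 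Let $\mu(t)$ be a continuous branch of $\tfrac12\sqrt{\Delta(\gamma_s(t))}$ along $\Gamma_s$; by invertibility of $DF(\xi_0)$, $\Delta\circ\gamma_s$ has winding number $\pm 1$ about $0$ for all small $s$, so $\mu(t+1)=-\mu(t)$. Defining
\[
\widetilde V(t)\dfn\bmat{b(\gamma_s(t)) & b(\gamma_s(t)) \\ \tfrac12(d-a)(\gamma_s(t))+\mu(t) & \tfrac12(d-a)(\gamma_s(t))-\mu(t)},
\]
one verifies $\widetilde V(t+1)=\widetilde V(t)\smat{0 & 1\\ 1 & 0}$, so Theorem~\ref{thm:phase_formula} applies with $\Pi=\smat{0 & 1\\ 1 & 0}$ (its condition~(iii) is arranged by a free rescaling of the initial $V(0)$).

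A direct algebraic computation based on $\det\widetilde V=-2b\mu$ yields
\[
\widetilde w_1^T(t)\,\partial_t\widetilde v_1(t) \;=\; \tfrac12\,\partial_t\log(b\mu)(t) \;+\; \frac{b(\partial_t d-\partial_t a)-(d-a)\,\partial_t b}{4\,b\,\mu}(t),
\]
with an analogous formula (remainder sign flipped) for $j=2$. Taking imaginary parts and integrating over $[0,1]$, the first term contributes minus one half the net change of $\arg(b\mu)$ around $\Gamma_s$: since $b\to b(\xi_0)=1$ uniformly on $\Gamma_s$ as $s\to 0$, the change of $\arg b$ is $\bigo(s)$, while by the branch choice $\arg\mu$ accrues exactly $\pm\pi$ (the sign depending on the orientation of $\gamma$ and on the sign of $\det DF(\xi_0)$). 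This yields the leading $\mp\pi/2+\bigo(s)$ in $\alpha_1(s)$. For the remainder, invertibility of $DF(\xi_0)$ together with $\Gamma$ being bounded away from $\xi_0$ yields the uniform bound $|\Delta(\gamma_s(t))|\asymp s$, hence $|\mu(t)|\gtrsim\sqrt{s}$ uniformly in $t$, whereas $d-a=\bigo(s)$ and $\partial_t a,\partial_t b,\partial_t d=\bigo(s)$ pointwise produce a numerator of size $\bigo(s)$; the integrand is thus $\bigo(\sqrt{s})$ uniformly, and so is the integral. Combining, $\alpha_1(s)=\pm\pi/2+\bigo(\sqrt{s})\mod 2\pi$; the same conclusion for $\alpha_2(s)$ follows either from the companion computation or from $\alpha_1+\alpha_2\equiv\pi\mod 2\pi$ (Theorem~\ref{thm:mainres_2x2}). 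The main technical obstacle I expect is the uniform-in-$t$ lower bound $|\mu(t)|\gtrsim\sqrt{s}$ and the uniform smallness of the local coefficients along the entire loop: both follow from the invertibility of $DF(\xi_0)$, compactness of $\Gamma$, and smoothness of $A$, but care is required to verify that the implicit constants can indeed be taken independent of $t\in[0,1]$.
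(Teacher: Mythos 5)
Your proposal is correct and rests on the same main tool as the paper's proof --- Theorem~\ref{thm:phase_formula} applied to an explicit, non--parallel-transported eigenvector field built from a branch of $\sqrt{\Delta}$, followed by a uniform estimate of the resulting integral using $|\Delta(\gamma_s(t))|\gtrsim s$ (which, as you note, follows from the invertibility of $DF(\xi_0)$ and the fact that $\Gamma$ stays away from $\xi_0$). The organization of the key step differs, though. The paper normalizes only to zero trace, takes eigenvectors $\widetilde v_{1,2}=\smat{a\pm\sqrt{\Delta}\\ c}$, and computes only the \emph{difference} $\alpha_1(s)-\alpha_2(s)$: in that difference the logarithmic-derivative (winding) terms cancel identically, leaving a single integrand of size $\bigo(\sqrt{s})$, and the leading $\pm\pi/2$ is then obtained for free from the exact identity $\alpha_1+\alpha_2=\pi \bmod 2\pi$ of Theorem~\ref{thm:phasesum_2x2}. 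You instead normalize further to $A(\xi_0)=\smat{0&1\\0&0}$ and evaluate each $\alpha_j(s)$ directly, splitting off $\tfrac12\partial_t\log(b\mu)$ and identifying the leading term as minus half the winding of $\arg(b\mu)$, i.e.\ $\mp\pi/2$. Your route is slightly longer but more informative: it pins down the sign of $\pm\pi/2$ in terms of the orientation of $\gamma$ and $\det DF(\xi_0)$, and it is more explicit than the paper about the uniformity in $t$ of the bounds (the paper's ``apply the chain rule, divide by $\sqrt{s}$, take the limit'' is terser on exactly the point you flag). Two minor remarks: the block-decomposition reduction at the start is superfluous, since Theorem~\ref{thm:mainres_2x2} already assumes $A\in\cont^k(\Omega,\Ctxt)$; and your eigenvector choice (first entry $b$) versus the paper's (second entry $c$) are complementary --- yours is guaranteed nondegenerate near $\xi_0$ by your normalization $b(\xi_0)=1$, whereas the paper's formula tacitly needs $c\ne 0$ along the loop.
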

\begin{proof}
    Without loss of generality, we can take $A\in\cont^k(\Omega,\Ctxt)$ to have $0$-trace:
    \begin{equation*}
        A=\bmat{a(\xi) & b(\xi) \\ c(\xi) & -a(\xi)},\ \xi\in\Omega.
    \end{equation*}
    With $\Delta(\xi)\dfn a(\xi)^2+b(\xi)c(\xi)$, the eigenvalues of $A$ are $\lam_1(\xi)=\sqrt{\Delta(\xi)}$,  $\lam_2(\xi)=-\sqrt{\Delta(\xi)}$ (where, for consistency, the same branch of the square root is used in both expressions), and the corresponding eigenvectors are
    \begin{equation*}
        \widetilde v_1(\xi)=
        \bmat{a(\xi)+\sqrt{\Delta(\xi)} \\ c(\xi)},\ 
        \widetilde v_2(\xi)=
        \bmat{a(\xi)-\sqrt{\Delta(\xi)} \\ c(\xi)}.
    \end{equation*}
Note that, for $\xi\ne\xi_0$, we have that $\lam_1(\xi)\ne\lam_2(\xi)$ and $v_1(\xi), v_2(\xi)$ are linearly independent. We are, therefore, in the position of applying Theorem \ref{thm:phase_formula} (see Remark \ref{rem:singleval_eigenvec}), and direct computation yields, for $s>0$,
\begin{equation}\label{eq:phasediff_2x2}
    \alpha_1(s)-\alpha_2(s)=-\int\limits_0^1\Img\left(
    \frac{c(\gamma_s(t))\pardt a(\gamma_s(t))-a(\gamma_s(t))\pardt c(\gamma_s(t))}{c(\gamma_s(t))\sqrt{\Delta(\gamma_s(t))}}
    \right)\,dt, \mod 2\pi.
\end{equation}
Applying the chain rule, dividing by $\sqrt{s}\in (0,1]$, and taking the limit as $s\rightarrow 0$, we obtain
\begin{equation*}
    \lim\limits_{s\rightarrow 0}\frac{\alpha_1(s)-\alpha_2(s)}{\sqrt{s}}=\text{const.}
\end{equation*}
where $\text{const.}$ depends on $\dot \gamma$, hence on the parametrization of $\gamma$, but not on $s$.
Pairing this with $\alpha_1(s)+\alpha_2(s)=\pi,\mod 2\pi$ (see Theorem \ref{thm:mainres_2x2}), yields the sought result.
\end{proof}
\begin{rem}\label{rem:special2x2}
    Note that, if $A(\xi)=\bmat{0 & b(\xi) \\ c(\xi) & 0}$, then in Equation \eqref{eq:phasediff_2x2} we have precisely $\alpha_1(s)-\alpha_2(s)=0,\mod 2\pi$, for all $s\in(0,1]$, that is, in Equation \eqref{eq:loops2point_GCP}:
    \begin{equation*}
    \alpha_1(s)=\alpha_2(s)=\pm\dfrac{\pi}{2},\mod 2\pi,\ \text{ for all}\ s\in(0,1].
    \end{equation*}
\end{rem}
Our next task is to consider the case of a loop in parameter space enclosing several GCPs and to see how the eigenvectors accrue a phase in this case. To deal with this situation, we will need a preliminary result showing that, under conditions \eqref{eq:realP} and \eqref{eq:imgP}, the smooth eigendecomposition of $A(t)$ solution of \eqref{eq:diffeq} enjoys a reversibility property: if we evolve the eigendecomposition according to \eqref{eq:diffeq}, imposing \eqref{eq:realP} and \eqref{eq:imgP}, say from $A(0)$ to $A(1)$ and then back to $A(0)$, then there is no phase accumulation for the eigenvectors from the beginning to the end of the path.

\begin{lem}\label{lem:back_and_forth}
    Let $A\in\cont^k([0,1],\Cnxn)$, $k\ge 1$, have distinct eigenvalues for all $t$. Let $\eta:t\in[0,1]\mapsto[0,1]$ be $\cont^k$, $k\ge 1$, and such that $\eta(1-t)=\eta(t)$, $\eta(0)=\eta(1)=0$, $\eta(1/2)=1$. Let $A(\eta(t))=V(t)\Lambda(t)V\inv(t)$ be an eigendecomposition satisfying \eqref{eq:realP} and \eqref{eq:imgP}. Then, $V(1)=V(0)$.
\end{lem}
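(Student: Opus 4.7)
The plan is to exploit the time-reversal symmetry encoded by $\eta$. Define $U(t)\dfn V(1-t)$; since $\eta(1-t)=\eta(t)$, we immediately get
\[
U(t)\,\Lam(1-t)\,U\inv(t)=V(1-t)\,\Lam(1-t)\,V\inv(1-t)=A(\eta(1-t))=A(\eta(t)),
\]
so $U$ is a $\cont^k$ eigendecomposition of $A(\eta(t))$ with relabelled eigenvalue matrix $\Lam^U(t)\dfn\Lam(1-t)$. If I can show that $U$ solves \eqref{eq:diffeq} together with \eqref{eq:realP}--\eqref{eq:imgP}, and observe that $U(1/2)=V(1/2)$ with $\Lam^U(1/2)=\Lam(1/2)$, then uniqueness of the IVP forces $V\equiv U$, and evaluating at $t=0$ yields $V(0)=V(1)$.

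The core computation is verifying the ODE for $U$. Differentiating, $\dot U(t)=-\dot V(1-t)=-U(t)\,P(1-t)$, where $P$ is the matrix from \eqref{eq:diffeq} for $V$. The off-diagonal entries of $P(1-t)$ carry a factor of $\dot\eta(1-t)=-\dot\eta(t)$ coming from $\frac{d}{d\tau}A(\eta(\tau))\big|_{\tau=1-t}=A'(\eta(t))\dot\eta(1-t)$; this minus sign cancels the leading minus in $\dot U=-U\,P(1-t)$, so the off-diagonal entries that remain are exactly those prescribed by \eqref{eq:diffeq} for the decomposition $U\Lam^U U\inv$. The side conditions are inherited: negation preserves reality, so \eqref{eq:imgP} holds for the diagonal of the $P$-matrix associated to $U$; and \eqref{eq:realP} amounts to each column of $V$ having constant norm, a property passed to the columns of $U$ since $\|u_j(t)\|=\|v_j(1-t)\|$.

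With $V$ and $U$ both solving the same IVP on $[0,1]$ with the same datum $V(1/2)$ at $t=1/2$, and with the right-hand side Lipschitz in the matrix variable (the distinctness of the eigenvalues of $A$ on $[0,1]$ bounds the denominators $\lam_k-\lam_j$ away from $0$), standard ODE uniqueness gives $V(t)=V(1-t)$ on $[0,1]$. Setting $t=0$ concludes the proof.

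The step I expect to require the most care is the bookkeeping of signs and of the induced labeling $\Lam^U$: the negative sign from $\frac{d}{dt}V(1-t)$ and the one from $\dot\eta(1-t)=-\dot\eta(t)$ must cancel precisely, and one must recognize that the resulting ODE really is \eqref{eq:diffeq} for the eigendecomposition $U\Lam^U U\inv$ of $A(\eta(\cdot))$, not a time-reversed or sign-twisted version. A minor additional subtlety is that $\dot\eta(1/2)=0$, so the vector field vanishes at the matching point; this is harmless for Lipschitz uniqueness.
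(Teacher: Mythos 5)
Your proof is correct, and it takes a genuinely different route from the paper's. The paper proceeds via the explicit phase formula of Theorem~\ref{thm:phase_formula}: it writes each phase $\alpha_j$ as $-\int_0^1 \Img(\widetilde{w}_j^T(\eta(t))\,\dot{\widetilde{v}}_j(\eta(t))\,\dot\eta(t))\,dt$, splits the integral at $t=1/2$, substitutes $t\mapsto 1-t$ in the second half, and uses $\dot\eta(1-t)=-\dot\eta(t)$ to force cancellation, yielding $\alpha_j=0 \mod 2\pi$ and hence $V(1)=V(0)$. You instead exploit the same symmetry at the level of the ODE \eqref{eq:diffeq} itself: you set $U(t)=V(1-t)$, check that the twin signs coming from $\frac{d}{dt}V(1-t)$ and from $\dot\eta(1-t)=-\dot\eta(t)$ cancel so that $(U,\Lam(1-\cdot))$ satisfies \eqref{eq:diffeq} with the side conditions \eqref{eq:realP}--\eqref{eq:imgP} for $A(\eta(\cdot))$, note that $U(1/2)=V(1/2)$ and $\Lam(1-\cdot)$ and $\Lam$ agree at $t=1/2$, and invoke ODE uniqueness (the right-hand side is Lipschitz on $[0,1]$ since the eigenvalues of $A(\eta(t))$ stay uniformly separated and $V$ stays invertible). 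The bookkeeping of signs and of the inherited side conditions that you flag as the delicate point indeed checks out. Your approach is cleaner in that it avoids any appeal to the phase formula and gives the stronger conclusion $V(t)=V(1-t)$ on all of $[0,1]$, not just at the endpoints; the paper's approach has the advantage of reusing machinery (Theorem~\ref{thm:phase_formula}) already set up for the asymptotic results, and of producing the phases $\alpha_j$ directly, which is the currency the rest of Section~\ref{GCP} trades in.
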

\begin{proof}
    Since $A(\eta(\cdot))$ satisfies the hypothesis of Theorem \ref{thm:phase_formula}, with the notation of that Theorem we have $V\inv(0)V(1)=\Phi$, with $\Phi=\diag(e^{i\alpha_j},j=1,\ldots,n)$. We need to show that each $\alpha_j=0$, $\mod 2\pi$. In fact, through \eqref{eq:phase_formula}, for each $j$ we have
        \begin{equation*}
    \alpha_j = -\int\limits_0^1\Img\left(\widetilde w_j^T(\eta(t)) \pardt\widetilde{v}_j(\eta(t))\right)dt, \mod 2\pi,
    \end{equation*}
    and therefore
    \begin{equation*}
    \begin{split}
    \alpha_j & = -\int\limits_0^\frac{1}{2}\Img\left(\widetilde w_j^T(\eta(t)) \dot{\widetilde{v}}_j(\eta(t))\dot\eta(t)\right) dt -\int\limits_\frac{1}{2}^1\Img\left(\widetilde w_j^T(\eta(t)) \dot{\widetilde{v}}_j(\eta(t))\dot\eta(t)\right) dt\\
    & = -\int\limits_0^\frac{1}{2}\Img\left(\widetilde w_j^T(\eta(t)) \dot{\widetilde{v}}_j(\eta(t))\dot\eta(t)\right) dt +\int\limits_\frac{1}{2}^0\Img\left(\widetilde w_j^T(\eta(1-t)) \dot{\widetilde{v}}_j(\eta(1-t))\dot\eta(1-t)\right) dt\\
    & = -\int\limits_0^\frac{1}{2}\Img\left(\widetilde w_j^T(\eta(t)) \dot{\widetilde{v}}_j(\eta(t))\dot\eta(t)\right) dt +\int\limits_0^\frac{1}{2}\Img\left(\widetilde w_j^T(\eta(t)) \dot{\widetilde{v}}_j(\eta(t))\dot\eta(t)\right) dt\\
    & = 0, \mod 2\pi.
    \end{split}
    \end{equation*}
    \,
\end{proof}
With the help of Lemma \ref{lem:back_and_forth} we can now prove that, taking a $2\times 2$ function on a loop containing two GCPs, there is no exchange of eigenvalues. In particular, this means that two GCPs can go undetected when the eigenvalues are continued around a loop that contains the two coalescing points.
\begin{thm}\label{thm:2GCPs_2x2}
    Let $A\in\cont^k(\Omega,\Ctxt)$, $k\ge1$, with $\Omega$ an open and convex subset of $\R^2$. Let $\xi_0, \xi_1\in\Omega$ be two distinct GCPs, and suppose that $A(\xi)$ has distinct eigenvalues for all $\xi\in\Omega\setminus\{\xi_0,\xi_1\}$. Let $\Gamma$ be a smooth Jordan curve enclosing both $\xi_0$ and $\xi_1$, parametrized by $\gamma$ in such way that both $\gamma(\cdot)$ and $A(\gamma(\cdot))$ are $1$-periodic. Finally, let $A(\gamma(t))=V(t)\Lam(t)V\inv(t)$ be a $\cont^k$ eigendecomposition of $A$ satisfying \eqref{eq:realP} and \eqref{eq:imgP}.
Then, for all $t\in\R$, we have:
\begin{enumerate}[(i)]
 \item $\lam_1(t+1) = \lam_1(t)\quad\text{ and }\quad\lam_2(t+1)= \lam_2(t)$;
 \item $V\inv(t)V(t+1) = \bmat{e^{i\alpha_1} & 0 \\ 0 & e^{i\alpha_2}}$, with $\alpha_1+\alpha_2=0,\mod 2\pi$.
\end{enumerate}
\end{thm}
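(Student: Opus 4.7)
The plan is to reduce the two-GCP case to repeated applications of the one-GCP results (Theorem~\ref{thm:2cycle_2x2} and Theorem~\ref{thm:mainres_2x2}) via a homotopy argument, and then to extract the phase identity from Lemma~\ref{lem:alpha_trace_det}(iii). The key observation is that once one knows the permutation $\Pi$ in $V^{-1}(0)V(1)=\Pi\Phi$, the phase sum $\alpha_1+\alpha_2$ is determined $\mod 2\pi$ by $\det\Pi$: if $\Pi=I$ then Lemma~\ref{lem:alpha_trace_det}(iii) forces $e^{i(\alpha_1+\alpha_2)}=1$, which is exactly what (ii) asserts.

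First, I would pick a basepoint $p\in\Gamma$ and deform $\Gamma$, through loops in $\Omega\setminus\{\xi_0,\xi_1\}$ all based at $p$, into a ``dumbbell'' loop $\widetilde\Gamma=\sigma_0\cdot\Gamma_0\cdot\sigma_0^{-1}\cdot\sigma_1\cdot\Gamma_1\cdot\sigma_1^{-1}$, where each $\Gamma_j$ is a small Jordan curve encircling $\xi_j$ (and no other GCP) and $\sigma_j$ is a smooth arc from $p$ to a point on $\Gamma_j$. Such a deformation exists because $\Omega\setminus\{\xi_0,\xi_1\}$ is path-connected and both $\Gamma$ and $\widetilde\Gamma$ represent the same element of $\pi_1(\Omega\setminus\{\xi_0,\xi_1\},p)$, namely a product of loops winding once around each GCP.

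Next, I would argue that along a continuous family of loops $\gamma_s$, $s\in[0,1]$, connecting $\Gamma$ to $\widetilde\Gamma$ in $\Omega\setminus\{\xi_0,\xi_1\}$, the monodromy $N_s:=V_s^{-1}(0)V_s(1)=\Pi_s\Phi_s$ depends continuously on $s$. This follows from smooth dependence of solutions of the eigendecomposition ODE \eqref{eq:diffeq} on $s$, combined with Theorem~\ref{thm:blockdec} applied on compact pieces of the homotopy to guarantee a smooth eigendecomposition for each $s$. Since in the $2\times 2$ case $\Pi\Phi$ has zero off-diagonal entries when $\Pi=I$ and zero diagonal entries when $\Pi$ is the transposition---two disjoint closed subsets of $\C^{2\times 2}$---the map $s\mapsto\Pi_s$ is locally constant, hence constant. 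Thus $\Pi$ for $\Gamma$ equals $\Pi$ for $\widetilde\Gamma$, which I can compute directly: along each segment $\sigma_j^{\pm 1}$ the continuous labeling of eigenvalues is preserved (no GCP in a neighborhood), while each small loop $\Gamma_j$ swaps the labeling by Theorem~\ref{thm:2cycle_2x2}; two swaps compose to the identity, so $\Pi=I$, yielding (i), and Lemma~\ref{lem:alpha_trace_det}(iii) then yields (ii).

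The main technical obstacle is the homotopy-invariance step: continuity of $N_s$ requires uniform control of the ODE \eqref{eq:diffeq} as the loop deforms, which in turn requires uniform separation of eigenvalues along the compact image of the homotopy---a condition that holds precisely because the homotopy avoids $\xi_0$ and $\xi_1$. One must also verify that the intermediate loops $\gamma_s$ can be parametrized $1$-periodically in $\cont^k$ with $A\circ\gamma_s$ being $1$-periodic; this is a parametrization technicality that can be handled as in Agreement~\ref{agreement} and the footnote to Theorem~\ref{thm:2cycle_2x2}. Once these are in place, the topological separation between the two permutation types makes the locally-constant conclusion immediate.
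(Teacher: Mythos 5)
Your proposal is correct in outline, but it follows a genuinely different route than the paper's. The paper does \emph{not} use a homotopy argument for the monodromy. Instead it builds, from $\Gamma$ and a separating line segment $\ell$, an explicit figure-eight path $\zeta$ that traces $\Gamma$ once and $\ell$ twice (forward and backward). The reversibility result (Lemma~\ref{lem:back_and_forth}) shows that the back-and-forth along $\ell$ does not change $V$ at the segment's endpoint, whence $V(1)$ along $\Gamma$ equals $W(1)$ along $\zeta$. The paper then applies Theorem~\ref{thm:phasesum_2x2} to each of the two small loops that make up $\zeta$, obtaining two antidiagonal matrices whose product is diagonal with total phase $0\bmod 2\pi$; this gives the full monodromy along $\Gamma$ at once. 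Your proposal, by contrast, first isolates the permutation factor $\Pi$ and shows it is invariant under homotopies of the loop in $\Omega\setminus\{\xi_0,\xi_1\}$ (using the topological separation of the two permutation types in the space of monodromy matrices), reduces the computation of $\Pi$ to a dumbbell model where it is $I$ by two applications of Theorem~\ref{thm:2cycle_2x2}, and only then extracts the phase sum from Lemma~\ref{lem:alpha_trace_det}(iii) applied to $\Gamma$ directly. Your route avoids Lemma~\ref{lem:back_and_forth} entirely and decouples the permutation from the phase, whereas the paper keeps them together and avoids the continuity-of-monodromy technicalities that your homotopy step needs.

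A few points in your outline need tightening. (1) You do not actually need the intermediate loops $\gamma_s$ to make $A\circ\gamma_s$ $1$-periodic, nor do you need Theorem~\ref{thm:blockdec} on the homotopy image; solving the eigendecomposition ODE~\eqref{eq:diffeq} on $[0,1]$ with the fixed initial condition $V_s(0)=V_0$ at the common basepoint $p$ suffices, and continuity of $N_s$ then follows from continuous dependence on parameters for ODEs together with boundedness from~\eqref{eq:realP}. The periodicity hypothesis is only needed when you invoke Lemma~\ref{lem:alpha_trace_det} on the loop $\Gamma$ itself, where it is given. (2) The terminal dumbbell $\widetilde\Gamma$ is neither simple nor smooth, so you should either smooth and slightly separate the retraced arcs to stay in the class of $\cont^1$ loops throughout the homotopy, or compute $\Pi$ for $\widetilde\Gamma$ directly by tracking the labeling piecewise (as you indicate) and take the limit along the homotopy as $s\to 1$ to deduce $\Pi_s=I$ nearby, then constancy for all $s$. (3) Since $S_2$ is abelian, the fact that $[\widetilde\Gamma]$ might only be conjugate to $[\Gamma]$ in $\pi_1(\Omega\setminus\{\xi_0,\xi_1\},p)$ is harmless for the permutation, but it is worth saying so. (4) You should also state explicitly that, once $\Pi=I$ at $t=0$, the continuity of the eigenvalue labeling and part~(i) of Lemma~\ref{lem:alpha_trace_det} give $V^{-1}(t)V(t+1)=\diag(e^{i\alpha_1},e^{i\alpha_2})$ with constant $\alpha_j$ for every $t$, so that the hypotheses of Lemma~\ref{lem:alpha_trace_det} are indeed met with $\Pi=I$ for all $t$, not just $t=0$.
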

\begin{proof}
    \begin{figure}
        \centering
        \includegraphics[width=0.5\textwidth]{./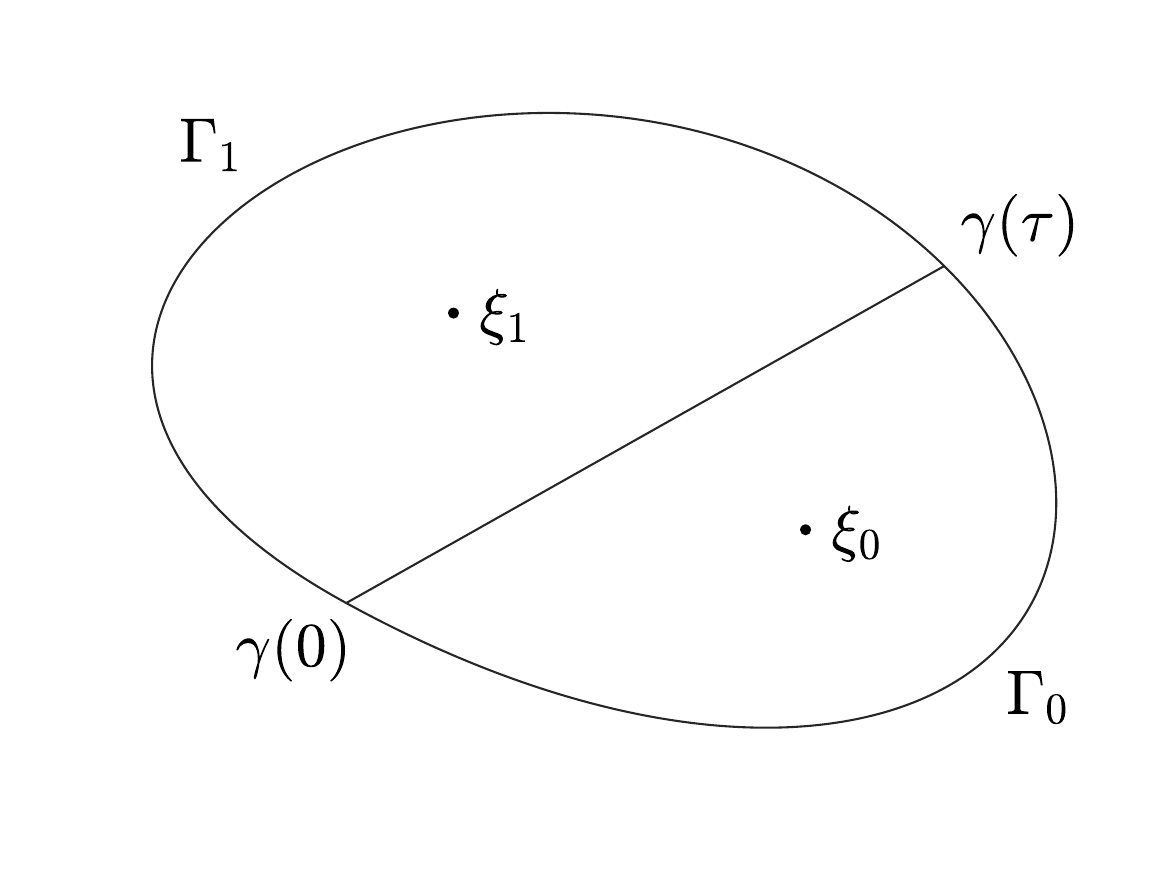}
        \caption{Two distinct GCPs inside the interior of the region bounded by the closed curve $\Gamma$ are separated by a line segment that does not contain any of the points.}
        \label{fig:patata}
    \end{figure}
    Consider a line segment joining two points on $\Gamma$ and separating $\xi_0$ and $\xi_1$ as in Figure \ref{fig:patata}. The segment does not contain $\xi_0$ and $\xi_1$, and we can assume that its end points are given by $\gamma(0)$ and $\gamma(\tau)$ for some $\tau\in(0,1)$. We can parametrize the segment by $\ell(z)=(\gamma(0)-\gamma(\tau))z+\gamma(\tau)$, $z\in[0,1]$, in such way that $\ell(0)=\gamma(\tau)$ and $\ell(1)=\gamma(0)$. Now consider the path
    \begin{equation}
        \zeta(s)=
        \begin{cases}
            \gamma(s), & s\in[0,\tau] \\
            \ell(s-\tau), & s\in[\tau,\tau+1] \\
            \ell(2+\tau-s), & s\in[\tau+1,\tau+2] \\
            \gamma(s-2), & s\in[\tau+2,3]
        \end{cases}
    \end{equation}
    Let $\zeta_0$ and $\zeta_1$ be the restrictions of $\zeta$ to $[0,\tau+1]$ and $[\tau+1,3]$, respectively. Assume, without loss of generality, that $\zeta_0$ parametrizes the simple closed curve $\Gamma_0$ that encircles $\xi_0$, and similarly for $\zeta_1$, $\xi_1$ and $\Gamma_1$, see Figure \ref{fig:patata}.  
    Observe that $\zeta_0$ and $\zeta_1$ are parametrized as piecewise smooth curves, and thus an eigendecomposition of $A$ along either of them will only be piecewise smooth: smooth along the portions of the original $\gamma$ and along the line segment, but with breakpoints and just continuity at the values $\gamma(\tau)$ and $\gamma(0)$, respectively. With this in mind, now consider $A(\zeta(3\sigma))$, $\sigma\in[0,1]$, and let $A(\zeta(3\sigma))=W(\sigma)D(\sigma)W\inv(\sigma)$ be a piecewise $\cont^k$ eigendecomposition of $A(\zeta(3\sigma))$ satisfying \eqref{eq:realP} and \eqref{eq:imgP} (except for the two breakpoints noted above) and also $W(0)=V(0)$. By construction, we have $V(\tau)=W(\tau/3)$. Because of Lemma \ref{lem:back_and_forth}, we have $W(\tau/3)=W((\tau+2)/3)$, and therefore also $V(\tau)=W((\tau+2)/3)$ and $V(1)=W(1)$. Now note that, because of Theorem \ref{thm:phasesum_2x2}, we have 
    \begin{equation*}
    \begin{array}{l}
    W\inv(0)W((\tau+1)/3)=\bmat{0 & e^{i\beta_{0,2}} \\ e^{i\beta_{0,1}} & 0}, \text{ with } \beta_{0,1}+\beta_{0,2}=\pi, \mod2\pi, \medskip\\
    W\inv((\tau+1)/3)W(1)=\bmat{0 & e^{i\beta_{1,2}} \\ e^{i\beta_{1,1}} & 0}, \text{ with } \beta_{1,1}+\beta_{1,2}=\pi, \mod2\pi.
    \end{array}
    \end{equation*}
    Combining all these identities, we finally have
    \begin{equation*}
    \begin{split}
        V\inv(0)V(1) & =W\inv(0)W\inv((\tau+1)/3)W((\tau+1)/3)W(1)= \\
        & =\bmat{0 & e^{i\beta_{0,2}} \\ e^{i\beta_{0,1}} & 0}\bmat{0 & e^{i\beta_{1,2}} \\ e^{i\beta_{1,1}} & 0}=\bmat{e^{i(\beta_{0,2}+\beta_{1,1})} & 0 \\ 0 & e^{i(\beta_{0,1}+\beta_{1,2})}}.
    \end{split}
    \end{equation*}
    The proof is completed by setting $\alpha_1\dfn\beta_{0,2}+\beta_{1,1}, \alpha_2\dfn\beta_{0,1}+\beta_{1,2}$ and noting that we have $\alpha_1+\alpha_2=0,\mod 2\pi$.
\end{proof}
Below, we give two examples to illustrate Theorem \ref{thm:2GCPs_2x2}.  In particular, in the first example we show that two GCPs may go undetected. In the second example, instead, monitoring the phases accrued by the eigendecomposition along a loop does betray the presence of two GCPs, even though periodicity of the eigenvalues gives no clue.

\begin{exm}\label{exm:phase_zero} Let $$A(x,y)=\bmat{0 & 1\\ x^2+y^2-\eps +iy & 0},$$ where $x,y\in\R$ and $\eps>0$.
For any $\eps>0$, there are exactly two distinct points of coalescence for the eigenvalues of $A$ in $\R^2$, located at
\begin{equation}\label{eq:exm1_GCPs}
\xi_{\pm}=\big(\pm\sqrt{\eps},0\big).
\end{equation}
Both points are GCPs since (see Definition \ref{def:cuspPoint})
\begin{equation*}
    F(x,y)=4\bmat{x^2+y^2-\eps \\ y},\quad DF(x,y)=
    4\bmat{2x & 2y \\ 0 & 1},
\end{equation*}
and hence $\xi_{\pm}$ are regular zeros for $F$.
Now consider the circle $C$ centered at the origin, parametrized by
\begin{equation}\label{eq:circles_param}
    \gamma(t)=\rho\bmat{
    \cos(2\pi t) \\ \sin(2\pi t)},\quad t\in[0,1],
\end{equation}
where $\rho$ is chosen large enough so that $C$ encloses both GCPs.
Let $A(\gamma(t))=V(t)\Lam(t)V\inv(t)$ be a $\cont^k$ eigendecomposition of $A$ along $C$ satisfying \eqref{eq:realP} and \eqref{eq:imgP} for $t\in[0,1]$. Then:
\begin{enumerate}[(i)]
    \item $\lam_1(1)=\lam_1(0),\ \lam_2(1)=\lam_2(0)$,
    \item $V(1)=V(0)$, i.e. one can take $\alpha_1=\alpha_2=0$ in item \emph{(ii)} of Theorem \ref{thm:2GCPs_2x2}.
\end{enumerate}
To understand why, observe the following:
\begin{enumerate}[(1)]
    \item The periodicity of the eigenvalues $\lam_1(\cdot)$, $\lam_2(\cdot)$ follows
    from a direct computation (and see also item \emph{(i)} of Theorem \ref{thm:2GCPs_2x2});
    \item By separating $\xi_+$ and $\xi_-$ with a vertical segment, as shown in Figure~\ref{fig:examples}, the phase accumulated by each eigenvector of $V$ around $C$ equals the sum of the phases accumulated along the paths $\Gamma_+$ and $\Gamma_-$ (see the proof of Theorem~\ref{thm:2GCPs_2x2});
    \item According to Remark~\ref{rem:special2x2}, the phases accrued along $\Gamma_+$ can be taken to be either $\pi/2$ or $-\pi/2$;
    \item Since $\overline{A(x,y)} = A(-x,-y)$ and $\Gamma_-$ is the image of $\Gamma_+$ under a $\pi$-rotation about the origin, then, in lieu of Remark~\ref{rem:complexConj_negativePhase}, the phases accrued along $\Gamma_-$ are the negatives of those accrued along $\Gamma_+$;
    \item Consequently, the total phase accrued by each eigenvector around $C$ is zero.
\end{enumerate}
Therefore, neither periodicity of the eigenvalues nor monitoring of the phases betrays the presence of the two GCPs.
\end{exm}

\begin{figure}\label{fig:examples}
    \centering
    \includegraphics[width=0.5\linewidth]{./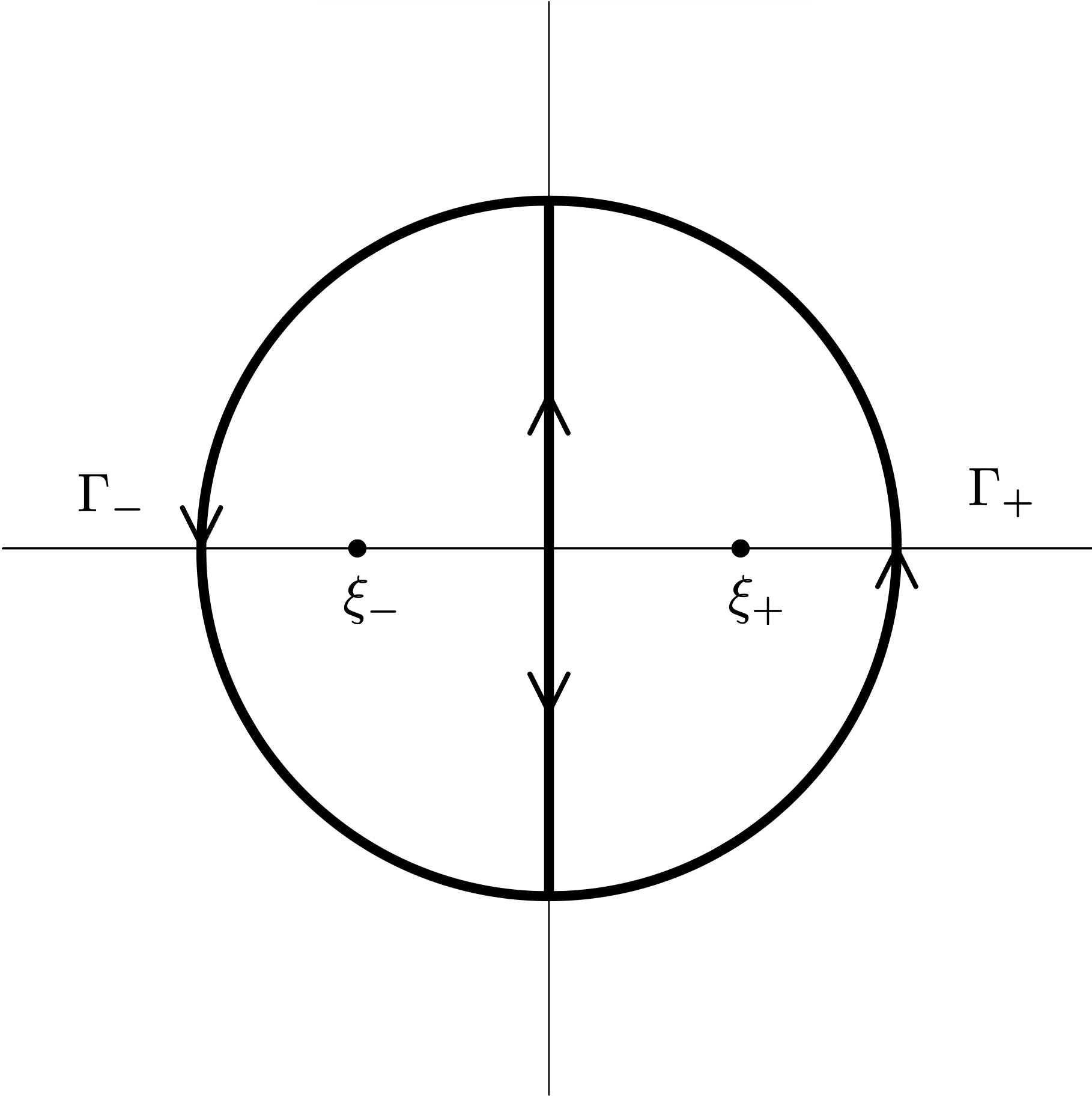}
    \caption{Reference figure for Examples \ref{exm:phase_zero} and \ref{exm:phase_pi}; $\Gamma_+$ is the right half-circle and $\Gamma_-$ is the left one, both traversed counterclockwise.}
\end{figure}
\begin{exm}\label{exm:phase_pi} Let $$A(x,y)=\bmat{0 & 1\\ xy-\eps +i (x^2-y^2-\eps)& 0},$$ where $x,y\in\R$ and $\eps>0$. Here too, for any $\eps>0$, there are exactly two distinct points of coalescence for the eigenvalues $A$ in $\R^2$, given by $\left(\pm \sqrt{\eps\frac{1 + \sqrt{5}}{2}},
\pm\sqrt{\eps\frac{2}{1 + \sqrt{5}} }\right)$. Again, we consider the circle $C$ parametrized by \eqref{eq:circles_param}, with $\rho$ large enough so that $C$ encloses both GCPs. Then, the $\cont^k$ eigendecomposition $A(\gamma(t))=V(t)\Lam(t)V\inv(t)$ satisfying \eqref{eq:realP} and \eqref{eq:imgP} satisfies:
\begin{enumerate}[(i)]
    \item $\lam_1(1)=\lam_1(0),\ \lam_2(1)=\lam_2(0)$,
    \item $V(1)=-V(0)$, i.e. one can take $\alpha_1=\alpha_2=\pi$ in item \emph{(ii)} of Theorem \ref{thm:2GCPs_2x2}.
\end{enumerate}
To see why, we reason as in Example~\ref{exm:phase_zero}, with the exception that, in item (4), $A(x,y) = A(-x,-y)$. As a result, by Remark~\ref{rem:complexConj_negativePhase}, the phases accrued along $\Gamma_-$ are the same as those accrued along $\Gamma_+$, and the total phase accumulated by each eigenvector around $C$ is $\pi$.
\end{exm}

So far, the effect of GCPs on eigenvectors under their continuation along Jordan curves has been analyzed only for $2 \times 2$ matrix-valued functions.
The following theorems address the case of $n\times n$ matrix functions.  Taken together, the next -and the previous- results tell us that by monitoring the periodicity of the eigenvalues along a loop, possibly supplemented by monitoring of the phases, may allow us to predict having GCPs inside the loop. Below, we first consider the case of having a single isolated GCP. Without loss of generality, we will assume that the coalescing eigenvalues are labeled as $\lam_1$ and $\lam_2$.

\begin{thm}\label{thm:singleGCP_nxn} Let $A\in\cont^k(\Omega,\Cnxn)$, $k\ge1$, where $n\ge 3$, and  $\Omega$ be a convex subset of $\R^2$. Let $\xi_0\in\Omega$ be a GCP, and suppose that $A(\xi)$ has distinct eigenvalues everywhere else in $\Omega$.
Let $\Gamma$ be a smooth Jordan curve encircling the point $\xi_0$, parametrized by $\gamma$ in such way that both $\gamma(\cdot)$ and $A(\gamma(\cdot))$ are $1$-periodic. Finally, let $A(\gamma(t))=V(t)\Lam(t)V\inv(t)$ be a $\cont^k$ eigendecomposition of $A$ satisfying \eqref{eq:realP} and \eqref{eq:imgP}.
Then, the eigenvalues of $A(\gamma(\cdot))$ can be labeled so that, for all $t\in\R$, we have:
\begin{enumerate}[(i)]
    \item $\lam_1(t+1) = \lam_2(t),\ \lam_2(t+1)=\lam_1(t)$;
    \item $\lam_j(t+1) = \lam_j(t)$ for all $j\ge 3$;
    \item $V^{-1}(t) V(t+1) = 
\bmat{
0 & e^{i\alpha_2}  & 0 & \cdots & 0 \\
e^{i\alpha_1} & 0 & 0 & \cdots & 0 \\
0 & 0 & e^{i\alpha_3} & \cdots & 0 \\
\vdots & \vdots & \vdots & \ddots & \vdots \\
0 & 0 & 0 & \cdots & e^{i\alpha_n}
}$,\ with $\sum\limits_{j=1}^n\alpha_j=\pi, \mod 2\pi$.
\end{enumerate}
Moreover, for a family of loops $\Gamma_s$ shrinking down to $\xi_0$ as in Theorem \ref{thm:loops2point_GCP},
we have, as $s\rightarrow 0$, 
\begin{equation*}
\begin{array}{ll}
\alpha_j(s)=\dfrac{\pi}{2}+\bigo(\sqrt{s}), \mod\pi, & \text{ for } j=1,2\,, \medskip\\
\alpha_j(s)=\bigo(s), \mod 2\pi, & \text{ for } j\ge 3\,.
\end{array}
\end{equation*}
\end{thm}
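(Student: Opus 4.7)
The strategy is to reduce the $n\times n$ case to the $2\times 2$ and simple-spectrum cases already handled in Theorems~\ref{thm:mainres_2x2} and~\ref{thm:simple_eig}, with the block decomposition Theorem~\ref{thm:blockdec} acting as the bridge. First, by Remark~\ref{rem:homotopy} and the convexity of $\Omega$, I continuously deform $\Gamma$ (without ever crossing a coalescence besides $\xi_0$, which is the only one in $\Omega$) into a Jordan curve $\widetilde\Gamma$ sitting inside a rectangle $R\subset\Omega$ on which Definition~\ref{def:cuspPoint}(ii) supplies an invertible $T\in\cont^k(R,\Cnxn)$ with $T^{-1}AT=\diag(E_1,E_2)$, where $E_1$ is $2\times 2$ with a GCP at $\xi_0$ and $E_2$ is $(n-2)\times(n-2)$ with distinct eigenvalues throughout $R$. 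Applying Theorem~\ref{thm:mainres_2x2} to $E_1$ on $\widetilde\Gamma$ gives (i), and Theorem~\ref{thm:simple_eig} applied to $E_2$ on $\widetilde\Gamma$ gives (ii); by the homotopy-invariance of eigenvalue periodicity noted in Remark~\ref{rem:homotopy}, both persist on the original $\Gamma$.

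To obtain (iii), I set $M(t)\dfn V^{-1}(t)V(t+1)$ and exploit the $1$-periodicity of $\gamma$: since $A(\gamma(t+1))=A(\gamma(t))=V(t)\Lam(t)V^{-1}(t)$, writing $A(\gamma(t+1))=V(t+1)\Lam(t+1)V^{-1}(t+1)$ yields the intertwining $\Lam(t)M(t)=M(t)\Lam(t+1)$, equivalently $M_{ij}(t)(\lam_i(t)-\lam_j(t+1))=0$. The eigenvalue permutation prescribed by (i)--(ii) then forces $M_{ij}$ to vanish except for $(i,j)\in\{(1,2),(2,1)\}\cup\{(j,j):j\ge 3\}$, while invertibility of $V$ keeps the surviving entries nonzero. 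Condition~\eqref{eq:realP}, combined with the implicit normalization $\norm{v_1(0)}=\norm{v_2(0)}$, forces these entries to be unimodular; condition~\eqref{eq:imgP} together with part~(i) of Lemma~\ref{lem:alpha_trace_det} (applied with $\Pi$ the transposition of indices $1$ and $2$) forces the phases $\alpha_j$ to be constant in $t$. Finally, taking determinants in $V(t+1)=V(t)\Pi\Phi$ and using part~(iii) of the same Lemma gives $\det\Pi\cdot e^{i\sum_j\alpha_j}=1$; since $\det\Pi=-1$, this yields $\sum_{j=1}^n\alpha_j=\pi\mod 2\pi$.

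The asymptotics for the shrinking family $\Gamma_s$ then follow by applying Theorem~\ref{thm:loops2point_GCP} to the sub-block $E_1$ (producing $\pi/2+\bigo(\sqrt s)\mod \pi$ for $\alpha_1$ and $\alpha_2$) and Theorem~\ref{thm:loops2point_simple_eig} to $E_2$ (producing $\bigo(s)\mod 2\pi$ for each $\alpha_j$, $j\ge 3$). The main technical obstacle I anticipate is a bookkeeping step: the phases $\alpha_j$ of the eigendecomposition of $A$ are not literally those of the sub-blocks, because passing from sub-block eigendecompositions $V_1,V_2$ to a genuine eigendecomposition $V=T\cdot\diag(V_1,V_2)\cdot\Theta$ of $A$ (with $\Theta(t)$ a smooth diagonal correction restoring~\eqref{eq:realP} and~\eqref{eq:imgP} for $A$) shifts each $\alpha_j$ by a further contribution that must be shown to be $\bigo(s)$ as $s\to 0$. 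Since $T$ and $\Theta$ are $\cont^k$ on all of $R$ and $\Gamma_s$ has length $\bigo(s)$, these corrections integrate to $\bigo(s)$ and are absorbed into the stated asymptotic orders, so the asymptotics of the sub-block phases transfer without modification to the phases of the full decomposition.
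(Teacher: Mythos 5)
Your proposal follows essentially the same route as the paper: reduce via Theorem~\ref{thm:blockdec} to the block form $\diag(E_1,E_2)$ on a rectangle around $\xi_0$, deform $\Gamma$ into that rectangle, apply Theorems~\ref{thm:mainres_2x2} and~\ref{thm:loops2point_GCP} to $E_1$ and Theorems~\ref{thm:simple_eig} and~\ref{thm:loops2point_simple_eig} to $E_2$, and invoke homotopy invariance to pull the conclusions back to $\Gamma$. You are in fact somewhat more explicit than the paper in two places: for item (iii) you derive the block structure of $V^{-1}(t)V(t+1)$ directly from the intertwining relation $\Lam(t)M(t)=M(t)\Lam(t+1)$ and get $\sum_j\alpha_j=\pi$ by taking determinants in $V(t+1)=V(t)\Pi\Phi$ and using $\det\Pi=-1$ together with Lemma~\ref{lem:alpha_trace_det}(iii), whereas the paper simply says this ``carries over by continuity of the deformation''; and you explicitly flag the bookkeeping needed to transfer the sub-block phase asymptotics to the phases of the full decomposition (via the smooth correction $\Theta$), a step the paper leaves implicit. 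Both of these added remarks are correct and strengthen the exposition, including your observation that the unimodularity of the off-diagonal entries implicitly uses $\norm{v_1(0)}=\norm{v_2(0)}$.
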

\begin{proof}
The proof follows easily from our previous results, and we therefore just outline it, omitting some technical details. Let $R\subset\Omega$ be a rectangular region containing $\xi_0$ in its interior. By Theorem~\ref{thm:blockdec}, the matrix $A$ admits a smooth block decomposition
\begin{equation*}
T^{-1}(\xi)A(\xi)T(\xi) = \begin{bmatrix} E_1(\xi) & 0 \\ 0 & E_2(\xi) \end{bmatrix},
\end{equation*}
for all $\xi \in R$, where $E_1 \in \cont^k(R, \C^{2\times 2})$ has eigenvalues $\lambda_1(\xi), \lambda_2(\xi)$ that coalesce only at the GCP $\xi_0$, while $E_2 \in \cont^k(R, \C^{(n-2)\times(n-2)})$ has eigenvalues that remain distinct throughout $R$.

Next, consider a homotopy that continuously deforms $\Gamma$ into a circle $C$ centered at $\xi_0$, fully contained in $R$, and parametrized by $\tilde{\gamma}$. Applying Theorems~\ref{thm:mainres_2x2} and \ref{thm:loops2point_GCP} to $E_1$, and Theorems~\ref{thm:simple_eig} and \ref{thm:loops2point_simple_eig} to $E_2$, we conclude that properties \emph{(i)} and \emph{(ii)} hold for the eigenvalues of $E_1(\tilde\gamma(t))$ and $E_2(\tilde\gamma(t))$, respectively. These properties carry over from $C$ to $\Gamma$ by continuity of the deformation, and hence extend to the eigenvalues $\lambda_1(t), \lambda_2(t)$ and $\lambda_3(t), \ldots, \lambda_n(t)$ of $A(\gamma(t))$.

Regarding \emph{(iii)}, the structure of $V^{-1}(t)V(t+1)$ follows from \emph{(i)} and \emph{(ii)}, and the fact that the $\alpha_j$’s are constant is established as in part \emph{(i)} of Lemma~\ref{lem:alpha_trace_det}. Finally, the identity $\sum_{j=1}^n \alpha_j = \pi \mod 2\pi$ also carries over by continuity of the deformation, and a similar argument yields the asymptotics for the $\alpha_j$'s as $\Gamma$ shrinks down to $\xi_0$.
\end{proof}
\color{black}

We now examine $n \times n$ matrix functions, with $n\ge 3$, that exhibit two distinct GCPs, each associated with a different pair of eigenvalues. Proofs are omitted, as they proceed along the same lines as the proofs of Theorems \ref{thm:2GCPs_2x2} and \ref{thm:singleGCP_nxn}.
\begin{thm}\label{thm:twoGCP_nxn_disj} Let $A\in\cont^k(\Omega,\Cnxn)$, $k\ge1$, where $n\ge 5$, and  $\Omega$ be a convex subset of $\R^2$. Assume that the eigenvalues of $A$ can be labeled so that, for all $\xi\in\Omega$, they are grouped into three mutually disjoint sets
\begin{equation*}
\{\lam_1(\xi),\lam_2(\xi)\},\quad \{\lam_3(\xi),\lam_4(\xi)\},\quad \{\lam_5(\xi),\ldots,\lam_n(\xi)\}.
\end{equation*}
Suppose that $\xi_0,\xi_1\in\Omega$ are two distinct GCPs where
\begin{equation*}
    \lam_1(\xi_0)=\lam_2(\xi_0),\quad \lam_3(\xi_1)=\lam_4(\xi_1),
\end{equation*}
and that $A(\xi)$ has distinct eigenvalues everywhere else in $\Omega$.
Let $\Gamma$ be a smooth Jordan curve encircling both points $\xi_0$ and $\xi_1$, parametrized by $\gamma$ in such way that both $\gamma(\cdot)$ and $A(\gamma(\cdot))$ are $1$-periodic. Finally, let $A(\gamma(t))=V(t)\Lam(t)V\inv(t)$ be a $\cont^k$ eigendecomposition of $A$ satisfying \eqref{eq:realP} and \eqref{eq:imgP}.
Then, the eigenvalues of $A(\gamma(\cdot))$ can be labeled so that, for all $t\in\R$, we have:
\begin{enumerate}[(i)]
    \item $\lam_1(t+1) = \lam_2(t),\ \lam_2(t+1)=\lam_1(t)$;
    \item $\lam_3(t+1) = \lam_4(t),\ \lam_4(t+1)=\lam_3(t)$;
    \item $\lam_j(t+1) = \lam_j(t)$ for all $j\ge 5$;
    \item $V^{-1}(t) V(t+1) = \bmat{
0 & e^{i\alpha_2} & 0 & 0 & 0 & \cdots & 0 \\
e^{i\alpha_1} & 0 & 0 & 0 & 0 & \cdots & 0 \\
0 & 0 & 0 & e^{i\alpha_4} & 0 & \cdots & 0 \\
0 & 0 & e^{i\alpha_3} & 0 & 0 & \cdots & 0 \\
0 & 0 & 0 & 0 & e^{i\alpha_5} & \cdots & 0 \\
\vdots & \vdots & \vdots & \vdots & \vdots & \ddots & \vdots \\
0 & 0 & 0 & 0 & 0 & \cdots & e^{i\alpha_n}
}$,\ with $\sum\limits_{j=1}^n\alpha_j=0, \mod 2\pi$.
\end{enumerate}
\qed
\end{thm}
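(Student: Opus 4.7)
The plan is to mirror the proof of Theorem~\ref{thm:singleGCP_nxn}, refining the block decomposition so that each coalescing pair of eigenvalues sits in its own $2\times 2$ diagonal block, and then assembling the single-GCP result (Theorem~\ref{thm:mainres_2x2}) for these two blocks with the simple-spectrum result (Theorem~\ref{thm:simple_eig}) for the remaining $(n-4)\times(n-4)$ block.

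First, I would fix a rectangle $R\subset\Omega$ whose interior contains $\Gamma$ (and hence $\xi_0$ and $\xi_1$). Because the three spectral groups $\{\lam_1,\lam_2\}$, $\{\lam_3,\lam_4\}$, $\{\lam_5,\ldots,\lam_n\}$ are pairwise disjoint throughout $\Omega$, a double application of Theorem~\ref{thm:blockdec} (cf.\ Remark~\ref{RemsGingold}) produces an invertible $T\in\cont^k(R,\Cnxn)$ with
\begin{equation*}
T^{-1}(\xi)\,A(\xi)\,T(\xi)=\diag\bigl(E_1(\xi),E_2(\xi),E_3(\xi)\bigr),\quad \xi\in R,
\end{equation*}
where $E_1,E_2\in\cont^k(R,\Ctxt)$ carry $\{\lam_1,\lam_2\}$ and $\{\lam_3,\lam_4\}$ respectively, and $E_3\in\cont^k(R,\C^{(n-4)\times(n-4)})$ carries $\{\lam_5,\ldots,\lam_n\}$. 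By construction $\xi_0$ is the unique GCP of $E_1$ in $R$, $\xi_1$ is the unique GCP of $E_2$ in $R$, and $E_3$ has simple spectrum throughout $R$.

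Next, take $\cont^k$ eigendecompositions $E_\ell(\gamma(t))=V_\ell(t)\Lam_\ell(t)V_\ell^{-1}(t)$, $\ell=1,2,3$, along the loop $\Gamma$. Each of the $2\times 2$ functions $E_1(\gamma(\cdot))$ and $E_2(\gamma(\cdot))$ encircles exactly one GCP, so Theorem~\ref{thm:mainres_2x2} yields the eigenvalue swaps in (i)--(ii) together with
\begin{equation*}
V_1^{-1}(t)V_1(t+1)=\bmat{0 & e^{i\alpha_2} \\ e^{i\alpha_1} & 0},\quad V_2^{-1}(t)V_2(t+1)=\bmat{0 & e^{i\alpha_4} \\ e^{i\alpha_3} & 0},
\end{equation*}
with $\alpha_1+\alpha_2\equiv\alpha_3+\alpha_4\equiv\pi\pmod{2\pi}$. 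Theorem~\ref{thm:simple_eig} applied to the simple-spectrum block $E_3(\gamma(\cdot))$ delivers the $1$-periodicity of $\lam_5,\ldots,\lam_n$ stated in (iii), and gives $V_3^{-1}(t)V_3(t+1)=\diag(e^{i\alpha_5},\ldots,e^{i\alpha_n})$. Thus $\widetilde V\dfn T\cdot\diag(V_1,V_2,V_3)$ is a $\cont^k$ eigenvector matrix for $A(\gamma(\cdot))$ that realizes the block-permutation skeleton in (iv), with $\Pi=\smat{0 & 1 \\ 1 & 0}\oplus\smat{0 & 1 \\ 1 & 0}\oplus I_{n-4}$.

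It remains to match this to the $V$ normalized by \eqref{eq:realP}--\eqref{eq:imgP}: writing $V(t)=\widetilde V(t)\Theta(t)$ for a smooth diagonal gauge $\Theta(t)$, the sparsity pattern of $V^{-1}(t)V(t+1)=\Theta^{-1}(t)\widetilde V^{-1}(t)\widetilde V(t+1)\Theta(t+1)$ coincides with that of $\Pi$, so only the individual phases get relabeled; Lemma~\ref{lem:alpha_trace_det}(i) then forces the new phases to be constant in $t$. Finally, taking determinants in the identity $V(t+1)=V(t)\Pi\Phi$ and invoking Lemma~\ref{lem:alpha_trace_det}(iii) gives $\det\Pi\cdot\prod_j e^{i\alpha_j}=1$; since $\det\Pi=(-1)(-1)(+1)=+1$, this is exactly $\sum_{j=1}^n\alpha_j\equiv 0\pmod{2\pi}$. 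I expect the only delicate point to be this gauge-matching step --- verifying that the block-level normalizations built into $V_1,V_2,V_3$ can be reconciled with the global normalizations \eqref{eq:realP}--\eqref{eq:imgP} for $V$ via a diagonal $\Theta$ that does not mix blocks --- but this is the same routine compatibility check already implicit in the proofs of Theorems~\ref{thm:singleGCP_nxn} and~\ref{thm:2GCPs_2x2}.
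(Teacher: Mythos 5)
Your approach is correct and is the one the paper intends: block-decompose $A$ into $E_1,E_2,E_3$ as in Theorem~\ref{thm:singleGCP_nxn}, apply the $2\times2$ single-GCP result (Theorem~\ref{thm:mainres_2x2}) to $E_1$ and $E_2$, apply Theorem~\ref{thm:simple_eig} to $E_3$, and pin down $\sum_j\alpha_j\equiv0\pmod{2\pi}$ via the determinant identity of Lemma~\ref{lem:alpha_trace_det}(iii) together with $\det\Pi=+1$. You are also right that the line-segment/back-and-forth device of Theorem~\ref{thm:2GCPs_2x2} is not needed here: since the two coalescing pairs $\{\lam_1,\lam_2\}$ and $\{\lam_3,\lam_4\}$ are disjoint, each GCP lives in its own $2\times2$ diagonal block; that device becomes essential only in Theorem~\ref{thm:twoGCP_nxn_consec}, where the two GCPs share $\lam_2$ and cannot be separated by any block decomposition.

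The one step that is not quite rigorous as written is the opening ``fix a rectangle $R\subset\Omega$ whose interior contains $\Gamma$.'' Theorem~\ref{thm:blockdec} is stated only on rectangles, and such an $R$ need not exist inside a general convex $\Omega$ (e.g.\ $\Omega$ a disk and $\Gamma$ a circle close to its boundary). The paper's own proof of Theorem~\ref{thm:singleGCP_nxn} avoids this by first choosing a rectangle $R\subset\Omega$ containing the GCP(s), deforming $\Gamma$ by a homotopy into a small curve $C\subset R$, applying the $2\times2$/simple-spectrum results along $C$, and then carrying the resulting eigenvalue-periodicity pattern back to $\Gamma$ by continuity (Remark~\ref{rem:homotopy}). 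Once (i)--(iii) are established for $\Lam$ along $\Gamma$ itself, the sparsity pattern in (iv) follows directly from simplicity of the eigenvalues (each $v_j(t+1)$ must be a scalar multiple of the matching column of $V(t)$), \eqref{eq:realP} gives unit moduli, Lemma~\ref{lem:alpha_trace_det}(i) gives constancy, and your determinant computation finishes the proof; in particular the explicit gauge-matching $V=\widetilde V\Theta$ is then unnecessary. With that single step replaced by the homotopy/continuity argument, your proof is complete and coincides with the paper's intended one.
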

\begin{thm}\label{thm:twoGCP_nxn_consec} Let $A\in\cont^k(\Omega,\Cnxn)$, $k\ge1$, where $n\ge 4$ and  $\Omega$ be a convex subset of $\R^2$. Assume that the eigenvalues of $A$ can be labeled so that, for all $\xi\in\Omega$, they are grouped into two mutually disjoint sets
\begin{equation*}
\{\lam_1(\xi),\lam_2(\xi)\,\lam_3(\xi)\},\quad \{\lam_4(\xi),\ldots,\lam_n(\xi)\}.
\end{equation*}
Suppose that $\xi_0,\xi_1\in\Omega$ are two distinct GCPs where
\begin{equation*}
    \lam_1(\xi_0)=\lam_2(\xi_0),\quad \lam_2(\xi_1)=\lam_3(\xi_1),
\end{equation*}
and that $A(\xi)$ has distinct eigenvalues everywhere else in $\Omega$.
Let $\Gamma$ be a smooth Jordan curve encircling both points $\xi_0$ and $\xi_1$, parametrized by $\gamma$ in such way that both $\gamma(\cdot)$ and $A(\gamma(\cdot))$ are $1$-periodic. Finally, let $A(\gamma(t))=V(t)\Lam(t)V\inv(t)$ be a $\cont^k$ eigendecomposition of $A$ satisfying \eqref{eq:realP} and \eqref{eq:imgP}.
Then, the eigenvalues of $A(\gamma(\cdot))$ can be labeled so that, for all $t\in\R$, we have:
\begin{enumerate}[(i)]
    \item $\lam_1(t+1) = \lam_2(t),\ \lam_2(t+1)=\lam_3(t),\ \lam_3(t+1)=\lam_1(t)$;
    \item $\lam_j(t+1) = \lam_j(t)$ for all $j\ge 4$;
    \item $V^{-1}(t) V(t+1) = \bmat{
0 & e^{i\alpha_2} & 0 & 0 & \cdots & 0 \\
0 & 0 & e^{i\alpha_3} & 0 & \cdots & 0 \\
e^{i\alpha_1} & 0 & 0 & 0 & \cdots & 0 \\
0 & 0 & 0 & e^{i\alpha_4} & \cdots & 0 \\
\vdots & \vdots & \vdots & \vdots & \ddots & \vdots \\
0 & 0 & 0 & 0 & \cdots & e^{i\alpha_n}
}$,\ with $\sum\limits_{j=1}^n\alpha_j=0, \mod 2\pi$.
\end{enumerate}
\qed
\end{thm}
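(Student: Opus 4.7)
The plan is to combine the block--decomposition argument of Theorem~\ref{thm:singleGCP_nxn} with the cut--and--split argument of Theorem~\ref{thm:2GCPs_2x2}. First, since along the trajectory of $\gamma$ we have $\{\lam_1(\xi),\lam_2(\xi),\lam_3(\xi)\}\cap\{\lam_4(\xi),\ldots,\lam_n(\xi)\}=\emptyset$ on a rectangle $R\subset\Omega$ enclosing $\Gamma$, Theorem~\ref{thm:blockdec} yields a $\cont^k$ similarity bringing $A$ to block diagonal form with blocks $E_1\in\cont^k(R,\C^{3\times 3})$ and $E_2\in\cont^k(R,\C^{(n-3)\times(n-3)})$. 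The eigenvalues of $E_2$ are distinct on all of $R$, so Theorem~\ref{thm:simple_eig} immediately delivers (ii), that the bottom--right $(n-3)\times(n-3)$ block of $V\inv(t)V(t+1)$ is $\diag(e^{i\alpha_j},j=4,\dots,n)$, and that $\sum_{j=4}^n\alpha_j\equiv 0\pmod \pi$.

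For the $3\times 3$ block $E_1(\gamma(t))$, I would proceed exactly as in the proof of Theorem~\ref{thm:2GCPs_2x2}. Pick a line segment joining two points on $\Gamma$ separating $\xi_0$ from $\xi_1$, and build the piecewise-smooth loops $\zeta_0$ encircling only $\xi_0$ and $\zeta_1$ encircling only $\xi_1$. Along $\zeta_0$ only $\lam_1,\lam_2$ coalesce (at $\xi_0$) while $\lam_3$ stays distinct, so Theorem~\ref{thm:singleGCP_nxn} applied to $E_1$ yields a transposition $(1,2)$ and a phase matrix
\begin{equation*}
M_0=\bmat{0 & e^{i\beta_{0,2}} & 0 \\ e^{i\beta_{0,1}} & 0 & 0 \\ 0 & 0 & e^{i\beta_{0,3}}},\qquad \beta_{0,1}+\beta_{0,2}+\beta_{0,3}\equiv\pi\pmod{2\pi}.
\end{equation*}
Along $\zeta_1$ only $\lam_2,\lam_3$ coalesce (at $\xi_1$), giving a transposition $(2,3)$ and
\begin{equation*}
M_1=\bmat{e^{i\beta_{1,1}} & 0 & 0 \\ 0 & 0 & e^{i\beta_{1,3}} \\ 0 & e^{i\beta_{1,2}} & 0},\qquad \beta_{1,1}+\beta_{1,2}+\beta_{1,3}\equiv\pi\pmod{2\pi}.
\end{equation*}

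The back-and-forth traversals of the separating segment cancel in phase by Lemma~\ref{lem:back_and_forth}, exactly as in the proof of Theorem~\ref{thm:2GCPs_2x2}, so the effect on $V$ of traversing $\Gamma$ equals $M_0M_1$. A direct computation shows $M_0M_1$ is a monomial matrix with nonzero entries supported on the cyclic permutation $1\mapsto2\mapsto3\mapsto1$, which yields (i) and (iii) with $\alpha_1=\beta_{0,1}+\beta_{1,1}$, $\alpha_2=\beta_{0,3}+\beta_{1,2}$, $\alpha_3=\beta_{0,2}+\beta_{1,3}$. That the $\alpha_j$ are genuinely constant, rather than $\cont^k$ functions of $t$, follows from part (i) of Lemma~\ref{lem:alpha_trace_det}.

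Finally, to establish $\sum_{j=1}^n\alpha_j\equiv 0\pmod{2\pi}$, I would invoke part (iii) of Lemma~\ref{lem:alpha_trace_det}, which gives $\det V(t+1)=\det V(t)$ and hence $\det(V\inv(t)V(t+1))=1$. Expanding this determinant on the claimed block structure, the $3$-cycle block contributes $e^{i(\alpha_1+\alpha_2+\alpha_3)}$ (a $3$-cycle is even), and the lower diagonal block contributes $e^{i(\alpha_4+\cdots+\alpha_n)}$, so their product equals $1$. The main technical care is in indexing the product $M_0M_1$ correctly so that the resulting 3-cycle agrees with the ordering in (i); once that bookkeeping is done, every remaining step is an immediate consequence of the results already established.
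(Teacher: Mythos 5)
Your proposal is correct and is precisely the argument the paper intends: the paper omits the proof of this theorem, stating only that it ``proceeds along the same lines as the proofs of Theorems~\ref{thm:2GCPs_2x2} and \ref{thm:singleGCP_nxn},'' which is exactly the combination you carry out (block decomposition to isolate the $3\times3$ block, the separating-segment/Lemma~\ref{lem:back_and_forth} argument to compose the two transpositions into a $3$-cycle, and constancy of the phases via Lemma~\ref{lem:alpha_trace_det}). Your determinant computation for $\sum_{j=1}^n\alpha_j\equiv 0 \pmod{2\pi}$, using that a $3$-cycle is an even permutation together with part \emph{(iii)} of Lemma~\ref{lem:alpha_trace_det}, is also sound.
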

\begin{rem}
We refrain from stating a full generalization of Theorems \ref{thm:singleGCP_nxn}, \ref{thm:twoGCP_nxn_disj}, and \ref{thm:twoGCP_nxn_consec} to account for an arbitrary number of GCPs, as this would be unnecessarily cumbersome. What is important to emphasize is that, when computing an eigendecomposition $A(t) = V(t)D(t)V^{-1}(t)$ satisfying \eqref{eq:realP} and \eqref{eq:imgP} along a loop containing several GCPs, each GCP contributes to the pattern and phases of the entries of $V^{-1}(t)V(t+1)$ in a (partially) predictable way.
\end{rem}
\section{Conclusions}
In this work, we have considered smooth complex valued matrix functions depending on 2 (real) parameters: $A\in \cont^k(\Omega, \Cnxn)$, $\Omega$ a convex subset of  $\R^2$.  Unlike the better understood case of Hermitian $A$, in this work we focused on general, unstructured, $A$.  Our goal has been to rigorously characterize how to identify parameter values where eigenvalues of $A$ coalesce, which we call cuspidal points.  Our main result has been to show that, if we take a loop in parameter space enclosing a generic cuspidal point (GCP), and with the eigenvalues being otherwise distinct inside and along the loop, then by looking at the periodicity of the eigenvalues and to the phase accumulation of the eigenvectors along the loop, one is able to detect generic cuspidal points.  For this result to hold, the function $V$ of eigenvectors of $A$ along the loop must be taken smooth and satisfy very specific conditions for the real and imaginary parts of the diagonal of $V^{-1}\dot V$; see our relations
\eqref{eq:realP} and \eqref{eq:imgP}.
We further extended our results to the case of two (or more) GCPs enclosed by a loop and showed that in this case the eigenvalues' periodicity does not betray the GCPs, but the phases accumulations may.  Finally, we also gave results on the asymptotic values of the phase accumulation when there is, or not, a GCP inside a loop shrinking to a point (the GCP itself, if there is one inside the loop).

The study of robust algorithmic techniques to numerically locate GCPs will be part of future work, as it will be the statistical study of the distribution and density (in parameter space) of GCPs for two-parameter families of random unstructured matrices.

\appendix
\section{Periodicity of the composition of functions}\label{appendix:period}
Let $f:\R^2\to\R$ be a smooth function. Let $\mathbb{S}^1=\{(x,y)\in\R^2: x^2+y^2=1\}$ be the unit circle. Let $\gamma$ be the following parametrization of $\mathbb{S}^1$:
\begin{equation}\label{def:standardParam}
 \gamma(t)=\bmat{\cos(2\pi t) \\ \sin(2\pi t)},\ t\in\R.
\end{equation}
We say that a function $\varphi$ of one real variable $t$ is $\tau$-periodic if
\begin{equation}\label{def:periodic}
\varphi(t+\tau)=\varphi(t)
\end{equation}
for all $t\in\R$, with $\tau>0$. If $\tau$ is the smallest strictly positive number for which \eqref{def:periodic} holds, we say that $\tau$ is the \emph{minimal positive period} of $\varphi$. The function $\gamma$ defined in \eqref{def:standardParam} has minimal positive period 1.

We know that the minimal positive period of the composition $f(\gamma(t))$ could be strictly smaller than 1. For instance, consider $f(x,y)=xy$.

\begin{thm}\label{thm:no_subperiod}
Given any non-constant $\cont^k$ function $f:\R^2\to\R$, $k\ge 0$, there exists a $\cont^\infty$  function $\varphi:\R\to\R$ such that:
\begin{enumerate}[(i)]
 \item $\varphi(0)=0$, $\varphi(1)=1$;
 \item $f\circ\gamma\circ\varphi$ is $1$-periodic;
 \item 1 is the minimal positive period of $f\circ\gamma\circ\varphi$.
\end{enumerate}
\end{thm}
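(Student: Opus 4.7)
Set $h \dfn f \circ \gamma$, a $\cont^k$ $1$-periodic function $\R \to \R$, which I assume non-constant (else the conclusion is vacuous, as a constant function has no minimal positive period). The group $P(h) \dfn \{\tau\in\R : h(\cdot+\tau)=h(\cdot)\}$ is a closed additive subgroup of $\R$; being non-trivial (as $1\in P(h)$) and proper (as $h$ is non-constant), it equals $\frac{1}{N}\mathbb{Z}$ for some positive integer $N$. If $N=1$, the identity $\varphi(t)=t$ trivially satisfies (i)--(iii). From now on assume $N\ge 2$.

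I will take $\varphi$ of the form $\varphi(t)=t+\psi(t)$, where $\psi\in\cont^\infty(\R,\R)$ is $1$-periodic with $\psi(0)=0$ and $\psi'>-1$ (so that $\varphi$ is strictly increasing). Then $\varphi(0)=0$, $\varphi(1)=1$, and $\varphi(t+1)=\varphi(t)+1$, which immediately yields (i) and makes $\gamma\circ\varphi$ (hence $f\circ\gamma\circ\varphi$) $1$-periodic, giving (ii). It remains to enforce (iii). Suppose, toward a contradiction, that $u\dfn h\circ\varphi$ has period $\tau<1$; since $u$ is non-constant, $1$-periodic, and continuous, its period group is discrete and contains $1$, forcing $\tau=1/m$ for some integer $m\ge 2$. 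Then, for every value $v$ in the range of $u$, the level set $u^{-1}(v)\cap[0,1)$ must be invariant under $t\mapsto t+1/m\pmod 1$; in particular its cardinality is divisible by $m$.

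The construction of $\psi$ is guided by this observation. I first pick $s^*\in(0,1/N)$ and $v^*\dfn h(s^*)$ so that $h^{-1}(v^*)\cap[0,1/N)=\{s^*\}$; this is possible because $h$ is non-constant on $[0,1/N]$ and so (for $k\ge 1$, via Sard's theorem together with strict local extrema in the interior) admits values attained at a single point. Then $h^{-1}(v^*)\cap[0,1)=\{s^*+k/N:k=0,\dots,N-1\}$ has exactly $N$ points. Next, I pick $0<\tau_0<\tau_1<\cdots<\tau_{N-1}<1$ such that the $N$-point set $\{\tau_0,\dots,\tau_{N-1}\}$ is \emph{not} invariant under $t\mapsto t+1/m\pmod 1$ for any divisor $m\ge 2$ of $N$ (these are the only $m$'s compatible with such invariance by cardinality). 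A concrete choice is $\tau_k=s^*+k/N$ for $k=0,\dots,N-2$ and $\tau_{N-1}=s^*+(N-1)/N+\delta$ for a sufficiently small $\delta\notin\frac{1}{N}\mathbb{Q}$; a direct orbit-by-orbit check shows this single perturbation breaks invariance for every proper divisor $m$ of $N$. Finally, let $\varphi|_{[0,1]}$ be a $\cont^\infty$ strictly increasing interpolant with $\varphi(0)=0$, $\varphi(\tau_k)=s^*+k/N$, and $\varphi(1)=1$ (standard monotone smooth interpolation), and extend by $\varphi(t+1)=\varphi(t)+1$.

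Verifying (iii): by construction $u^{-1}(v^*)\cap[0,1)=\varphi^{-1}(h^{-1}(v^*))\cap[0,1)=\{\tau_0,\dots,\tau_{N-1}\}$, which by choice is not $1/m$-invariant for any $m\ge 2$ dividing $N$, while cardinality rules out all other $m$; hence $u$ has no period in $(0,1)$, so its minimal positive period is $1$. The main technical obstacle I anticipate is the low-regularity case $k=0$, in which level sets of $h$ may be uncountable and the ``singleton preimage'' step can fail naively. In that case I would either restrict to the complement of the at most countable set of ``plateau'' values of $h$ (outside which $h^{-1}(v^*)$ is nowhere dense), or replace the explicit construction with a Baire-category / transversality argument on the Fr\'echet space of admissible $\psi$'s, showing that a generic $\psi$ gives a composition with minimal period $1$.
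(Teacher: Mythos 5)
Your proof takes a genuinely different route from the paper, built around controlling the cardinality and arrangement of a single level set of $u=h\circ\varphi$, and it has a real gap at the step where you assert the existence of $s^*\in(0,1/N)$ with $h^{-1}(v^*)\cap[0,1/N)=\{s^*\}$. This can fail even when $h$ is $C^\infty$: let $g$ be any smooth $1$-periodic function obtained by smoothing the piecewise-linear profile through $(0,0)$, $(1/8,1)$, $(1/4,0)$, $(3/8,1)$, $(1/2,0)$, $(5/8,-1)$, $(3/4,0)$, $(7/8,-1)$, $(1,0)$, so that $g$ has two positive bumps of equal height and two negative bumps of equal depth; then $g$ has minimal period $1$ and every value in its range is attained at exactly $2$ or $4$ points in $[0,1)$, never once. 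Setting $h(t)=g(Nt)$ gives minimal period $1/N$ and the same phenomenon on $[0,1/N)$. Your appeal to Sard's theorem plus strict local extrema does not help, because Sard guarantees finiteness (and genericity of regular values), not uniqueness, and the local extrema of $h$ need not be strict or of distinct heights --- that is a property of a \emph{generic} $C^1$ function, but $h$ is given and cannot be perturbed. So the ``singleton preimage'' step is not justified even for $k\ge 1$; the $k=0$ issue you flag is real but secondary. Your perturbation argument for the $\tau_k$ is correct once such an $s^*$ exists, but the construction does not get off the ground in general.

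The paper's proof avoids the issue entirely by a softer argument: choose $t^*$ with $g(t^*)\ne g(0)$ and a small interval $I$ around $t^*$ on which $g\ne g(0)$ (this needs only that $g$ is non-constant and continuous), then build a $C^\infty$ reparametrization $\varphi$ with $\varphi(0)=0$, $\varphi(t+1)=\varphi(t)+1$, and $\varphi([1/3,2/3])\subset I$; since for every integer $q\ge 2$ the lattice $\{k/q\}_{k=1}^{q-1}$ meets $[1/3,2/3]$, a period $1/q$ would force $g(\varphi(k/q))=g(\varphi(0))=g(0)$ for such a $k$, a contradiction. This bypasses any counting of level sets, which is precisely where your approach breaks. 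If you want to rescue your method, you would have to replace the singleton-preimage step by an argument that works for an arbitrary finite-to-one structure (e.g., by choosing $v^*$ so that the multiset of preimage positions, after perturbing $\varphi$, is not invariant under any rational rotation), but at that point the paper's interval argument is strictly simpler.
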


We will deduce Theorem \ref{thm:no_subperiod} from the following

\begin{lem}\label{lem:change_of_var}
 Given any non-constant $1$-periodic $\cont^k$ function $g:\R\to\R$, $k\ge 0$, there exists a $\cont^\infty$ function $\varphi:\R\to\R$ such that:
\begin{enumerate}[(i)]
 \item $g\circ\varphi$ is $1$-periodic;
 \item 1 is the least positive period of $g\circ\varphi$.
\end{enumerate}
\end{lem}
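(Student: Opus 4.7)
The plan is to rescale $g$ by the factor dictated by its intrinsic minimal period, so that the composition becomes $1$-periodic with minimal period exactly $1$. First I would observe that, since $g$ is continuous (as $k\ge 0$), non-constant, and admits $1$ as a period, the set
\[
P=\{\tau\in\R : g(t+\tau)=g(t) \text{ for all } t\in\R\}
\]
is a closed additive subgroup of $\R$ containing $1$. A closed additive subgroup of $\R$ is either $\{0\}$, of the form $\tau_0\mathbb{Z}$ for some $\tau_0>0$, or all of $\R$; the last case, combined with the continuity of $g$, would force $g$ to be constant, contradicting the hypothesis, and the first is ruled out since $1\in P$. Hence $P=\tau_0\mathbb{Z}$ for some $\tau_0>0$, and $1\in P$ forces $\tau_0=1/k$ for some positive integer $k$.

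Next, I would simply define $\varphi(t)\dfn t/k$, which is trivially $\cont^\infty$. For \emph{(i)}: $g(\varphi(t+1))=g(t/k+1/k)=g(t/k)=g(\varphi(t))$, since $1/k=\tau_0\in P$. For \emph{(ii)}: if $\sigma>0$ is any period of $g\circ\varphi$, then $g(t/k+\sigma/k)=g(t/k)$ for all $t\in\R$, whence $\sigma/k\in P=(1/k)\mathbb{Z}$ and $\sigma$ must be a positive integer; the smallest such value is $\sigma=1$. Non-constancy of $g\circ\varphi$, which makes ``least positive period'' meaningful, is automatic since $\varphi$ is a bijection.

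The only delicate ingredient is the structural claim that the period set of a non-constant continuous real-valued function is a discrete (equivalently, cyclic) subgroup of $\R$; once this is in hand, the rest is a single linear rescaling plus two one-line verifications. This also explains why the hypothesis $k\ge 0$ suffices but cannot be dropped: continuity is precisely what rules out the possibility that $P$ is dense in $\R$.
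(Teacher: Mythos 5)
Your proof is correct, but it takes a genuinely different route from the paper's. You classify the group of periods of $g$: by continuity it is a closed additive subgroup of $\R$ containing $1$, hence (since non-constancy rules out $P=\R$) of the form $(1/m)\mathbb{Z}$ for a positive integer $m$, and you then rescale linearly via $\varphi(t)=t/m$ (note the clash with the smoothness index $k$ in your write-up). The paper instead constructs a degree-one reparametrization: a $\cont^\infty$ map with $\varphi(t+1)=\varphi(t)+1$ that compresses all of $[1/3,2/3]$ into a small interval $I$ on which $g\ne g(0)$, and then excludes any sub-period $1/q$ because some multiple $k/q$ lies in $[1/3,2/3]$, giving $g(\varphi(k/q))\ne g(\varphi(0))$; in passing it uses essentially your density observation to reduce to rational sub-periods. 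Your argument is shorter and cleaner as a proof of the lemma as literally stated. What the paper's construction buys, and yours does not, is the normalization $\varphi(0)=0$, $\varphi(1)=1$ together with $\varphi(t+1)=\varphi(t)+1$: these are exactly what Theorem~\ref{thm:no_subperiod} asserts and what the footnote to Theorem~\ref{thm:2cycle_2x2} relies on, since there $\gamma\circ\varphi$ must remain a $1$-periodic parametrization tracing the Jordan curve exactly once per unit of $t$. With $\varphi(t)=t/m$ and $m>1$, the map $\gamma\circ\varphi$ covers only an arc of $\Gamma$ on $[0,1)$, so your $\varphi$ could not be substituted into the intended application even though it does prove the lemma as stated.
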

\begin{proof}
Since $g$ is non-constant, there exist $t^*\in(0,1)$ and $\eps>0$ such that
\begin{equation}
 g(t)\neq g(0) \text{ for all } t \text{ in } I=[t^*-\eps, t^*+\eps].
\end{equation}
There exists a function $\varphi:\R\to\R$ such that:
\begin{enumerate}[i)]
 \item $\varphi(t)\in I$ for all $t\in[1/3,2/3]$;
 \item $\varphi(t+1)=\varphi(t)+1$ for all $t$ in $\R$;
 \item $\varphi$ is $\cont^\infty$ on $\R$.
\end{enumerate}

One recipe to construct such a function $\varphi$ is as follows: 

\begin{enumerate}[(1)]
 \item Consider the piecewise linear function $\varphi_0$ depicted in Figure \ref{fig:phi_p};
 \item extend $\varphi_0$ to $\R$ in such way that $\varphi(t+1)=\varphi(t)+1$ for all $t$ in $\R$;
 \item take a $\cont^\infty$ symmetric positive mollifier $\rho_\delta$ supported on $[-\delta,\delta]$, with $\delta>0$ sufficiently small;
 \item define $\varphi=\varphi_0\ast\rho_\delta$, where $\ast$ is the convolution operation.
\end{enumerate}

A simple check shows that $g\circ\varphi$ is $1$-periodic. Any other period $\tau\in(0,1)$ of $g\circ\varphi$ must be rational. If not, $g$ would be constant. Without loss of generality, we can restrict our attention to rational periods of the form $1/q$, with $q\ge2$ integer. Let $q\ge2$ be an integer. There exists $k\in\{1, 2, \ldots, q-1\}$ integer such that $k/q\in[1/3,2/3]$, which implies $g(\varphi(k/q))\ne g(0)$. This means that $g\circ\varphi$ cannot have period $1/q$.

\begin{figure}[ht]
\begin{center}
\includegraphics[scale=.5]{./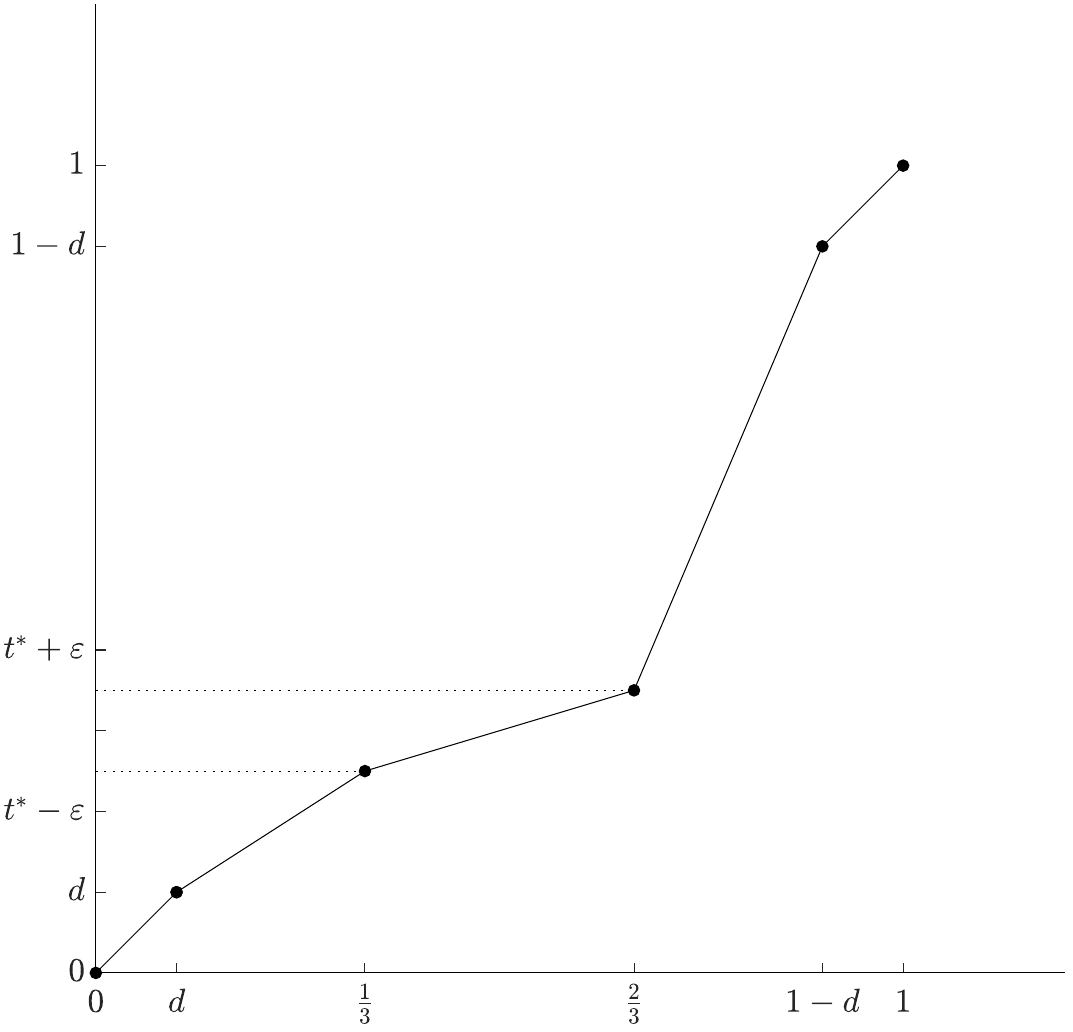}
\caption{Graph of $\varphi_0$.}
\label{fig:phi_p}
\end{center}
\end{figure}
\end{proof}

\begin{proof}[Proof of Theorem \ref{thm:no_subperiod}]
 Apply Lemma \ref{lem:change_of_var} to $g=f\circ\gamma$.
\end{proof}
\begin{rem}\label{rem:perCompFun_complex}
Note that the conclusion of Theorem \ref{thm:no_subperiod} also holds for functions $f:\R^2 \to \C$, by applying the previous argument to either the real or imaginary part of $f$.
\end{rem}

\bibliographystyle{siamplain}

\end{document}